\newtheorem{thm}{Theorem}[section]
\theoremstyle{definition}
\newtheorem{cor}[thm]{Corollary}
\newtheorem{prop}[thm]{Proposition}
\newtheorem{theo}[thm]{Theorem}
\newtheorem{lemm}[thm]{Lemma}
\newtheorem{rem}[thm]{Remark}
\newtheorem{exam}[thm]{Example}
\numberwithin{equation}{section}
\begin{document}

% Enter full title and short title for running headers
\title{Contact Structures on Plumbed 3-Manifolds}

% Author name(s)
\author{\c{C}a\u{g}r\i \, Karakurt}%\affil{1}}
% Abbreviated author name for running headers
\email{ karakurt@math.msu.edu}%
\subjclass{58D27,  58A05, 57R65}
\date{\today}
% Abbreviated author name for first page header
%\headabbrevauthor{Author, F., and S. Author}

\address{Department of Mathematics, The University of Texas at Austin, 1 University Station, C1200 TX 78712.}
%and
%\affilnum{2}Complete Second Author Address}

% Address / e-mail address of corresponding author

\begin{abstract}

In this paper, we show that the Ozsv\'ath-Szab\'o contact invariant $c^+(\xi)\in HF^+(-Y)$ of a contact $3$-manifold $(Y,\xi)$ can be calculated combinatorially if $Y$ is the boundary of a certain type of plumbing $X$, and $\xi$ is induced by a Stein structure on $X$. Our technique uses an algorithm of Ozsv\'ath and Szab\'o  to determine the Heegaard-Floer homology of such $3$-manifolds. We discuss two important applications of this technique in contact topology. First, we show that it simplifies the calculation of the Ozsv\'ath-Stipsicz-Szab\'o obstruction  to admitting a planar open book. Then we define a numerical invariant of  contact manifolds that respects a partial ordering induced by Stein cobordisms. We do a sample calculation showing that the invariant can get infinitely many distinct values.

\end{abstract}

\maketitle

\section{Introduction}

\vspace{0.3cm}

The last decade was the scene of many achievements in contact topology in dimension three. In year 2000, in his seminal work \cite{G}, Giroux established a one to one correspondence between contact structures and open book decompositions of closed oriented $3$-manifolds. This allowed Ozsv\'ath and Szab\'o  to find a Heegaard-Floer homology class  that reflects certain properties of a given contact structure, \cite{OS2}. In another direction, based on Giroux's work, Ozbagci and Etnyre \cite{EO} defined an invariant, the support genus, which is the minimal page genus of an open book decomposition compatible with a fixed contact structure. Previously, Etnyre \cite{E1} had found out that being supported by a genus zero open book puts some restrictions on intersection forms of symplectic fillings of a contact structure. His result was later improved by Ozsv\'ath, Stipsicz and Szab\'o  who showed that the image of the Ozsv\'ath-Szab\'o  contact invariant  in the reduced version of Heegaard-Floer homology is actually an obstruction to be supported by a planar open book. More precisely,  they proved the following.

\vspace{0.2 cm}

\begin{theo}\label{goss}(Theorem 1.2 in \cite{OSS})
 Suppose that the contact structure $\xi$ on $Y$ is compatible with a planar
open book decomposition. Then its contact invariant $c^+(\xi)\in HF^+(-Y )$ is contained in
$U^d \cdot HF^+(-Y )$ for all $d \in \mathbb{N}$.
\end{theo}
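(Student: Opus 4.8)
The plan is to use the naturality of the Ozsv\'ath--Szab\'o contact invariant under Stein cobordisms: first reduce to the Stein fillable case, then embed a Stein filling into a closed rational surface in which the $U$--divisibility of $c^+(\xi)$ becomes visible.

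\emph{Step 1 (reduction to Stein fillable planar contact structures).} Fix a planar open book $(\Pi,\phi)$ compatible with $\xi$, with $\Pi$ a disc with $g$ holes, and write $\phi$ as a product of Dehn twists. Using the identity $\tau_c^{-1}\tau_d=\tau_d\,\tau_{\tau_d^{-1}(c)}^{-1}$ repeatedly to move every left--handed twist to the right of every right--handed one (the right--handed factors staying twists about simple closed curves of $\Pi$), we may write $\phi=\psi\circ\tau_{\delta_1}^{-1}\circ\cdots\circ\tau_{\delta_m}^{-1}$ with $\psi$ a product of right--handed twists. Adding $\tau_{\delta_m},\dots,\tau_{\delta_1}$ back to the monodromy one at a time is Legendrian surgery on Legendrian knots lying on a page, i.e.\ attachment of Stein $2$--handles, so it produces a Stein cobordism $X_-\colon (Y,\xi)\to(Y',\xi')$; here $(Y',\xi')$ is supported by the planar open book $(\Pi,\psi)$ and hence is Stein fillable, being the boundary of the Lefschetz fibration over $D^2$ with fibre $\Pi$ and vanishing cycles the defining curves of $\psi$. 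By naturality $F^+_{\overline{X_-},\mathfrak{s}_{X_-}}(c^+(\xi'))=c^+(\xi)$, and since cobordism maps are $U$--equivariant, $c^+(\xi')\in U^d\,HF^+(-Y')$ forces $c^+(\xi)\in U^d\,HF^+(-Y)$. So it suffices to treat $(Y',\xi')$, which we now rename $(Y,\xi)$.

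\emph{Step 2 (embedding a Stein filling into a rational surface).} Let $W$ be the Lefschetz fibration over $D^2$ with fibre $\Pi$ realizing the Stein filling, so $\partial W=Y$. Cap off the boundary circles of $\Pi$ by discs to embed $\Pi\hookrightarrow S^2$; the vanishing cycles become disjoint simple closed curves in $S^2$ missing the capping discs. Since the mapping class group of $S^2$ is trivial, the Lefschetz fibration over a disc with fibre $S^2$ and these vanishing cycles is completed by a trivial $S^2$--bundle over the complementary disc to a closed symplectic Lefschetz fibration $X\to S^2$ with sphere fibres --- so $X$ is a blow--up of a ruled surface, $b_2^+(X)=1$. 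By construction $W\subset X$; the complement $Z:=\overline{X\setminus W}$ is a symplectic cap of $(Y,\xi)$ that deformation retracts onto $F\cup S_0\cup\cdots\cup S_g$, where $F$ is a regular fibre --- an embedded sphere with $[F]^2=0$ and $[F]\neq 0$ --- and the $S_j$ are the disjoint section spheres swept out by the capping discs, dual to $F$.

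\emph{Step 3 (reading off $U$--divisibility).} Removing a ball, view $Z$ as a cobordism $Z^\circ\colon S^3\to -Y$ and $\nu(F)\cong S^2\times D^2$ as sitting inside it; cutting along $\partial\nu(F)\cong S^2\times S^1$ factors $Z^\circ$ as a composition $Z^\circ=\mathcal{Z}\circ(\nu(F)\setminus B^4)$ with $\nu(F)\setminus B^4\colon S^3\to S^2\times S^1$ and $\mathcal{Z}\colon S^2\times S^1\to -Y$. The gluing law for the relative Heegaard Floer invariant of the Stein filling $W$ inside $X$ (equivalently, naturality of $c^+$ along these pieces) identifies $c^+(\xi)$, up to sign and a $U$--power absorbed from the grading shift, with $F^+_{\mathcal{Z}}$ applied to the relative invariant of $(\nu(F),F)$ in $HF^+(S^2\times S^1)$. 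Since $HF^+(S^2\times S^1)$ is a sum of towers $\mathcal{T}^+$ on which $U$ acts surjectively, that relative invariant lies in $U^d\,HF^+(S^2\times S^1)$ for every $d$; applying the $U$--equivariant map $F^+_{\mathcal{Z}}$ gives $c^+(\xi)\in U^d\,HF^+(-Y)$ for all $d$, as claimed.

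The genuinely hard step is the last one: turning ``the cap $Z$ carries an essential square--zero sphere'' into the concrete factorization of $c^+(\xi)$ through $HF^+(S^2\times S^1)$. This uses the composition law and $U$--equivariance of the $HF^+$ cobordism maps, the vanishing $F^\infty_{W,\mathfrak{s}}=0$ for $b_2^+(W)\ge 1$, and, above all, the gluing/naturality statement identifying the relative invariant of a Stein filling with the contact class, all orchestrated along $X=W\cup_Y\mathcal{Z}\cup\nu(F)$; one must also check that the relevant relative invariant of $(\nu(F),F)$ is precisely the class carried onto $c^+(\xi)$, so that the conclusion is not vacuous. A secondary technical point is the bookkeeping in Step~1 --- verifying that the monodromy factorization and its Stein handle attachments really realize $(Y',\xi')$ as a Stein fillable planar contact manifold receiving a Stein cobordism from $(Y,\xi)$.
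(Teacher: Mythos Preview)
This theorem is not proved in the present paper; it is quoted from \cite{OSS} (as the label ``Theorem~1.2 in \cite{OSS}'' indicates) and used as a black box in the proof of Theorem~\ref{rest} and in the definition of the $\sigma$-invariant. There is therefore no proof here to compare your attempt against.

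That said, your sketch is faithful to the argument actually given in \cite{OSS}. The reduction to the Stein fillable planar case by composing the monodromy with right-handed Dehn twists and invoking naturality of $c^+$ under Stein cobordisms (your Step~1), the capping-off of the planar Lefschetz fibration to a genus-zero Lefschetz fibration over $S^2$ producing a symplectic cap $Z$ containing a homologically essential square-zero sphere fibre $F$ (your Step~2), and the factorization of the induced cobordism map through $HF^+(S^2\times S^1)$, on which $U$ acts surjectively (your Step~3), are exactly the ingredients used there.

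You are right to flag Step~3 as the delicate point, and as written it is not quite a proof. The phrase ``relative invariant of $(\nu(F),F)$'' and the unspecified ``gluing law'' are suggestive rather than precise; what is actually needed is the Ozsv\'ath--Szab\'o theorem that for a symplectic cap $Z$ of $(Y,\xi)$, the cobordism map $HF^+(S^3)\to HF^+(-Y)$ induced by $Z^\circ$ in the canonical $\mathrm{spin}^c$ structure hits $c^+(\xi)$. Once that input is in place, the composition law for cobordism maps and $U$-equivariance finish the argument exactly as you indicate. So your outline is correct and matches the original; only the justification of that single input needs to be made rigorous (by citing the relevant result rather than sketching a gluing heuristic).
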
 

\vspace{0.2 cm}

 In spite of having useful corollaries, this theorem may not be easy to apply all the time because it involves calculation of the group $HF^+$ and identification of the contact invariant in this group. The former problem can be solved if we restrict our attention to a certain class of  manifolds.  In \cite{OS1}, Ozsv\'ath and Szab\'o  gave a purely combinatorial description of Heegaard-Floer homology groups $HF^+$ of some $3$--manifolds which are given as the boundary of certain plumbings of disk bundles over sphere. The  present work is about pinning down the contact element within this combinatorial object.

\vspace{0.2 cm}

To state our main results, we shall assume that $G$ is a negative definite plumbing tree with at most one bad vertex. Let $X(G)$ and $Y(G)$ be the $4$-- and $3$--manifolds determined by the plumbing diagram respectively. Denote the set of all characteristic co-vectors of the lattice $H^2(X(G),\mathbb{Z})$ by $\mathrm{Char}(G)$. We form the group $\mathbb{K}^+(G)=(\mathbb{Z}^{n\geq 0}\times \mathrm{Char}(G))/\sim$ where the relation $\sim$ is to be described in Section \ref{algorithm}. Recall that the Heegaard-Floer homology group $HF^+$ of any $3$-manifold is equipped with an endomorphism $U$. In \cite{OS1} (see also Section \ref{algorithm} below), Ozsv\'ath and Szab\'o established the following isomorphism.

\begin{equation}\label{idenfloe}
 \mathrm{Hom} \left (\frac{\mathbb{K}^+(G)}{\mathbb{Z}^{>0}\times\mathrm{Char}(G)},\mathbb{F}\right )\simeq \mathrm{Ker}(U)\subset HF^+(-Y(G))
\end{equation}

Recall that if $\xi$ is a contact structure, its Ozsv\'ath-Szab\'o  contact invariant $c^+(\xi)$ is a homogeneous element in $\mathrm{Ker}(U)\subset HF^+(-Y(G))$. It is also known that $c^+(\xi)$ is non-zero if $\xi$ is induced by a Stein filling. The following proposition pins down the image of contact invariant under the above isomorphism. 

\vspace{0.5cm}

\begin{prop}\label{repr}
Let $J$ be a Stein structure on $X(G)$ and $\xi$ be the induced contact structure on $Y(G)$.  Under the identification described in Equation \ref{idenfloe}, the contact invariant $c^+(\xi)$ is represented by the dual of the first Chern class $c_1(J)\in H^2(X,\mathbb{Z}) $ .
\end{prop}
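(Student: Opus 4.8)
The plan is to pin down $c^+(\xi)$ by combining the cobordism--map characterization of the contact invariant of a Stein fillable contact structure with the fact that the isomorphism of Equation \ref{idenfloe} is itself built from the very same cobordism maps.

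The first step is to recall the cobordism--map description of the contact invariant. Removing an open ball from $X(G)$ produces a four--manifold whose two ends are $S^3$ and $Y(G)$; read as a cobordism $V$ from $-Y(G)$ to $-S^3\cong S^3$ and equipped with the $\mathrm{Spin}^c$ structure $\mathfrak s_K$ that restricts the one on $X(G)$ with $c_1=K$, it induces a map $F^+_{V,\mathfrak s_K}\colon HF^+(-Y(G))\to HF^+(S^3)=\mathcal T^+$. Functoriality of the Ozsv\'ath-Szab\'o contact invariant under Stein cobordisms, applied to $X(G)$ regarded as a Stein cobordism from $(S^3,\xi_{\mathrm{std}})$ to $(Y(G),\xi)$, together with the fact that $c^+(\xi_{\mathrm{std}})$ is the bottom generator $\Theta^+$ of $HF^+(S^3)$, yields
\[
 F^+_{V,\,\mathfrak s_{c_1(J)}}(c^+(\xi))=\Theta^+ .
\]
Over $\F$ there is no sign to track, and the only convention that must be settled with care is that the relevant $\mathrm{Spin}^c$ structure carries $c_1=c_1(J)$ rather than its conjugate, which is forced by the orientation conventions built into the definitions of $c^+$ and of the cobordism maps. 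As a bookkeeping check one can match gradings: the degree shift of $F^+_{V,\mathfrak s_{c_1(J)}}$ is $\tfrac14(c_1(J)^2+b_2(X(G)))$, which equals $-\mathrm{gr}\,c^+(\xi)$ as computed from Gompf's formula for $d_3(\xi)$ via the filling $X(G)$; this same number reappears in the applications of later sections.

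The second step is to invoke the construction, recalled in Section \ref{algorithm}, of the isomorphism of Equation \ref{idenfloe} due to Ozsv\'ath and Szab\'o \cite{OS1}: it comes from the injective total cobordism map $HF^+(-Y(G))\hookrightarrow\prod_{K\in\mathrm{Char}(G)}\mathcal T^+$ whose $K$-component is $F^+_{V,\mathfrak s_K}$, and the equivalence $\sim$ on $\mathbb K^+(G)$ records precisely the relations among the maps $F^+_{V,\mathfrak s_K}$ produced by the adjunction and blow-up formulas for the spheres $[\Sigma_v]\subset X(G)$ and by the composition law along the surgery exact triangles. Under this identification an element of $\mathrm{Ker}(U)$ becomes the functional sending the class of a covector $K$ to the $\Theta^+$-coefficient of its $K$-component. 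Consequently the displayed equation says exactly that the functional corresponding to $c^+(\xi)$ sends the class of $c_1(J)$ to $1$.

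The remaining --- and, I expect, hardest --- point is to upgrade this to the statement that $c^+(\xi)$ is precisely the functional dual to $c_1(J)$, i.e.\ that all of its other coordinates vanish. The degree count above already kills the $K$-component of $c^+(\xi)$ whenever $K^2\neq c_1(J)^2$, and $F^+_{V,\mathfrak s_K}$ annihilates $c^+(\xi)$ unless $\mathfrak s_K$ restricts on $Y(G)$ to the $\mathrm{Spin}^c$ structure of $\xi$ --- that is, unless $K$ lies in the same coset of $\mathrm{Char}(G)$ as $c_1(J)$; so only finitely many covectors remain to be disposed of. Here the Stein hypothesis is used quantitatively: by Gompf's handlebody description, $X(G)$ is obtained by attaching each $2$-handle along a Legendrian knot whose framing is one less than its Thurston-Bennequin number and whose rotation number is $\langle c_1(J),[\Sigma_v]\rangle$, so the Thurston-Bennequin inequality confines $c_1(J)$, at every vertex, to the range that singles out the characteristic covectors surviving as generators of $\mathbb K^+(G)/(\mathbb Z^{>0}\times\mathrm{Char}(G))$; feeding this extremality of $c_1(J)$ into the relations $\sim$ should then force the leftover coordinates to vanish, so that the class of $c_1(J)$ is a genuine generator and $c^+(\xi)$ is dual to it. Finally, the nonvanishing of $c^+(\xi)$ --- known because $\xi$ is Stein fillable --- is consistent with and subsumed by this conclusion, as the dual generator is itself nonzero.
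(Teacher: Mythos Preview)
Your first two steps are essentially the paper's own approach: the isomorphism of Equation \ref{idenfloe} is given by the pairing $P(a,K)=(F^+_{X(G),K}(a))_0$, so identifying $c^+(\xi)$ with the dual of $c_1(J)$ amounts to showing this pairing is $1$ for $K=c_1(J)$ and $0$ for every other class. Your argument for the value $1$ at $K=c_1(J)$ is correct.

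The genuine gap is in your third step. Degree and boundary $\mathrm{spin}^c$-restriction considerations do not dispose of all other $K$: there can exist initial vectors $K\neq c_1(J)$ with $K^2=c_1(J)^2$ and $K|_{Y(G)}=\mathfrak t_\xi$ that are \emph{inequivalent} to $c_1(J)$ in $\mathbb K^+(G)$---the paper's own $\Sigma(3,5,7)$ example has two such vectors $K_1,K_2$ at degree $0$. Your proposed fix via the Thurston--Bennequin inequality only constrains $c_1(J)$ itself (it lands among the initial vectors of \eqref{ini}); it says nothing about the vanishing of $F^+_{X(G),K}(c^+(\xi))$ for these other $K$. No amount of combinatorics of the relation $\sim$ can produce that vanishing, because it is a statement about the specific element $c^+(\xi)$, not about the lattice. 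What is missing is precisely Theorem \ref{Plame}(1) (Plamenevskaya \cite{P}): for \emph{every} $\mathrm{spin}^c$ structure $\mathfrak s'\neq\mathfrak s_J$ on $X(G)$ one has $F^+_{X(G),\mathfrak s'}(c^+(\xi))=0$. The paper's proof simply invokes both parts of Theorem \ref{Plame} to obtain \eqref{eq:rep1} and \eqref{eq:rep2}; this vanishing is where the Stein hypothesis enters in a way that genuinely goes beyond adjunction-type inequalities, and it cannot be replaced by the combinatorial argument you sketch.
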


\vspace{0.5cm}

\begin{rem}\label{rem:repr}
This proposition can be generalized in several different directions. First, we may allow the graph $G$ to have two bad vertices. In this case, the group  on the left hand side of Equation \ref{idenfloe} gives only even degree elements in $\mathrm{Ker}(U)\subset HF^+(-Y(G))$.  Second, the graph $G$ which has at most one bad vertex could be semi-definite implying that $b_1(Y)=1$, and we use the generalization of the Ozsv\'ath-Szab\'o  algorithm given in \cite{R2}. Finally, keeping $G$ negative definite, we may require $J$ to be an $\omega-$tame almost complex structure on $X(G)$ for some symplectic structure $\omega$ which restricts positively on the set of complex tangencies of $Y(G)$ (i.e. $(X(G),\omega)$ forms a weak filling rather than a Stein filling for the corresponding contact structure on the boundary).
\end{rem}

\vspace{0.5cm}

 When combined with Theorem \ref{goss}, Proposition \ref{repr} allows us to determine whether or not certain contact structures admit planar open books. Recall that the correction term for any $\mathrm{spin}^c$ structure $\mathfrak{t}$ of a rational homology $3$-sphere $Y$ is the minimal degree  of any non-torsion class in $HF^+(Y,\mathfrak{t})$ coming from $HF^\infty(Y,\mathfrak{t})$.

\vspace{0.5cm}

\begin{theo}\label{rest}
Let $J$ be a Stein structure on $X(G)$ and $\xi$ be the induced contact structure on $Y(G)$. Denote the correction term of the induced $\mathrm{spin}^c$ structure $\mathfrak{t}$ on $Y(G)$ by $d$. Also, let $d_3(\xi)$ be the $3$-dimensional invariant of the contact structure $\xi$. Suppose that we have either $d_3(\xi)\neq -d-1/2$ or $\mathrm{rank}(HF^+_d(-Y(G),\mathfrak{t}))>1$ then $\xi$ can not be supported by a planar open book.  
\end{theo}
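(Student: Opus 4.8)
The plan is to argue by contradiction: assume $\xi$ is supported by a planar open book, and derive the negation of the hypothesis, i.e.\ show that $d_3(\xi) = -d - 1/2$ \emph{and} $\mathrm{rank}(HF^+_d(-Y(G),\mathfrak t)) = 1$. The starting point is Theorem \ref{goss}, which tells us that $c^+(\xi) \in U^k \cdot HF^+(-Y(G),\mathfrak t)$ for every $k$. Since $\xi$ is filled by the Stein structure $J$, we know $c^+(\xi) \neq 0$, and by Proposition \ref{repr} it is the homogeneous class represented (under the identification in Equation \ref{idenfloe}) by the dual of $c_1(J)$, hence it lives in $\mathrm{Ker}(U)$ in some fixed grading, call it $\delta$. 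Combining "$c^+(\xi)$ is a nonzero element of $\mathrm{Ker}(U)$ in grading $\delta$" with "$c^+(\xi) \in U^k \cdot HF^+$ for all $k$" forces the grading-$\delta$ part of $HF^+(-Y(G),\mathfrak t)$ to contain elements in the image of arbitrarily high powers of $U$; since $HF^+$ of a rational homology sphere is the sum of a finite piece and a "tower" $\mathcal T^+$ coming from $HF^\infty$, the only way an element is infinitely $U$-divisible is that it lies in the image of $HF^\infty$. So $c^+(\xi)$ is the bottom element of the tower $\mathcal T^+ \subset HF^+_d(-Y(G),\mathfrak t)$, which pins the grading: $\delta = d$, where $d$ is the correction term of $(−Y(G),\mathfrak t)$ by definition. (Here one must be careful about the sign/orientation conventions relating $d(Y,\mathfrak t)$ and $d(-Y,\mathfrak t)$, but this is standard.)

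Next I would extract the two numerical consequences. First, the grading of $c^+(\xi)$ in $HF^+(-Y(G),\mathfrak t)$ is, on one hand, $d$ by the above, and on the other hand $-d_3(\xi) - 1/2$ by the known formula relating the degree of the contact invariant to the three-dimensional invariant $d_3$; equating these gives $d_3(\xi) = -d - 1/2$. Second, since $c^+(\xi)$ is both nonzero and $U$-divisible to all orders, it must be exactly the bottom generator of the tower, and the tower is one-dimensional in each grading; if $\mathrm{rank}(HF^+_d(-Y(G),\mathfrak t))$ were $> 1$ we would need to rule out that the "extra" reduced generators in grading $d$ interfere — but any reduced class is $U$-torsion, hence cannot absorb an infinitely-divisible element, so the infinitely-$U$-divisible part of grading $d$ is exactly one-dimensional, and the nonzero class $c^+(\xi)$ being supported there forces that grading-$d$ piece to have rank $1$. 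This contradicts the hypothesis $\mathrm{rank}(HF^+_d(-Y(G),\mathfrak t)) > 1$, and likewise contradicts $d_3(\xi) \neq -d-1/2$.

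The main obstacle, and the step requiring the most care, is the bookkeeping of gradings and orientations: matching the absolute $\mathbb Q$-grading on $HF^+(-Y(G),\mathfrak t)$ as it appears in the combinatorial model of Equation \ref{idenfloe} with the grading in which the formula $\deg c^+(\xi) = -d_3(\xi) - 1/2$ is stated, and correctly tracking the effect of reversing orientation on correction terms (the identity $d(-Y,\mathfrak t) = -d(Y,\mathfrak t)$) so that the "$d$" in the statement — the correction term of $\mathfrak t$ on $Y(G)$ — is the one that actually appears. A secondary point to make rigorous is the claim that infinite $U$-divisibility of a nonzero homogeneous class forces it into the image of $HF^\infty$ and hence onto the bottom of the tower; this is immediate from the structure theorem for $HF^+$ of a rational homology sphere (finite reduced part plus a single tower per $\mathrm{spin}^c$ structure) together with the fact that $U$ acts nilpotently on the reduced part. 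Once these grading identifications are fixed, the logical skeleton above closes the argument.
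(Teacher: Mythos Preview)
Your argument for the condition $d_3(\xi) = -d - 1/2$ is correct and is essentially the content of Theorem~\ref{oss3} (Corollary~1.7 of \cite{OSS}), which the paper itself acknowledges handles that half of the hypothesis for arbitrary rational homology spheres. The new content of Theorem~\ref{rest} is the rank condition, and there your argument has a genuine gap.

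You correctly observe that if $\xi$ is planar then $c^+(\xi)$ is a nonzero, infinitely $U$-divisible element of $\ker(U)$, hence must coincide with the bottom generator of the tower $\mathcal{T}^+ \subset HF^+(-Y(G),\mathfrak{t})$. But from this you conclude that ``the nonzero class $c^+(\xi)$ being supported there forces that grading-$d$ piece to have rank $1$,'' and this inference is simply false: the infinitely $U$-divisible subspace of $HF^+_d$ being one-dimensional says nothing whatsoever about the rank of all of $HF^+_d$, which may well contain reduced classes in addition. Knowing only that $c^+(\xi)$ equals the tower-bottom element is entirely consistent with $\mathrm{rank}(HF^+_d) > 1$.

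What you are not using is the full force of Proposition~\ref{repr}, which you cite but from which you extract only a grading. That proposition does more: it identifies $c^+(\xi)$ with a \emph{specific} dual basis vector $K^* = c_1(J)^*$ in the combinatorial model of Equation~\ref{idenfloe}, not merely with some unspecified element of $\ker(U)$. The paper's proof exploits this by computing directly, via the combinatorial $U$-action of Equation~\ref{Uaction}, that the unique element of $\mathrm{Im}(U^{n_0})$ in degree $\deg(K^*)$ is $K^* + \sum_i (U^{n_i-n_0}\otimes K_i)^*$, where the sum runs over all \emph{other} generators $K_i$ of $\ker(U)$ with $\deg(K_i^*) \le \deg(K^*)$ in the same $\mathrm{spin}^c$ structure. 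This equals $K^*$ only when there are no such $K_i$, i.e., exactly when $\deg(K^*)$ is the minimal degree and $\mathrm{rank}(HF^+_d)=1$. Hence under either hypothesis $K^* = c^+(\xi)$ fails to lie in $\mathrm{Im}(U^{n_0})$. Your abstract structural argument cannot see this; the identification of $c^+(\xi)$ as a single dual basis vector, rather than an a priori arbitrary linear combination, is the essential extra input.
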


\vspace{0.5cm}

%\newpage

Note that checking the conditions stated in this theorem is simply a combinatorial matter, \cite{OSS} (see also Section \ref{algorithm} below). Corollary 1.7 of \cite{OSS}, which holds for arbitrary rational homology 3-spheres, implies the above statement when $d_3\neq -d(\xi)-1/2$. This could be taken as an evidence to conjecture that Theorem \ref{rest} also holds for every rational homology $3$-sphere.

\vspace{0.3cm}

\begin{rem}

There is another version of Ozsv\'ath-Szab\'o  contact invariant $c(\xi)$ which lives in $\widehat{HF}(-Y)$ and is related to $c^+(\xi)$ by $\iota(c(\xi))=c^+(\xi)$ where $\iota$ is the natural map $\iota:\widehat{HF}(-Y)\to HF^+(-Y)$. The invariant $c(\xi)$ can be calculated combinatorially as shown in \cite{P2} and \cite{BP}. However, for the present applications the usage of the $c^+$ is essential. 

\end{rem}

\vspace{0.3cm}

The techniques of this paper can also be used to study a natural partial ordering on contact $3-$manifolds up to some equivalence. Following \cite{EH} and \cite{Ga}, we write $(Y_1,\xi_1)\preceq (Y_2,\xi_2)$ if there is a Stein cobordism from  $(Y_1,\xi_1)$ to $(Y_2,\xi_2)$. Moreover, we  write $(Y_1,\xi_1)\sim(Y_2,\xi_2)$ if these contact manifolds satisfy $(Y_1,\xi_1)\preceq (Y_2,\xi_2)$ and conversely $(Y_2,\xi_2)\preceq (Y_1,\xi_1)$. Clearly, $\sim$ defines an equivalence relation on the set of contact manifolds and $\preceq$ is a partial ordering on the equivalence classes. One can define a numerical invariant of contact manifolds that respects this partial ordering. Namely, if we let

$$\sigma(Y,\xi)=-\max \left \{d:c^+(\xi)\in U^d \cdot HF^+(-Y ) \right \}$$

\noindent the naturality properties of the Ozv\'ath-Szab\'o  contact invariant (c.f. Section \ref{homology} below) imply that we have $\sigma(Y_1, \xi_1)\leq \sigma(Y_2, \xi_2)$ whenever $(Y_1,\xi_1)\preceq (Y_2,\xi_2)$. Note that $\sigma$ invariant can be infinite. In fact,  $\sigma(Y,\xi)=-\infty$ if $(Y,\xi)$ admits a planar open book   by Theorem \ref{goss}.  Clearly,  if two contact manifolds have different  $\sigma$-invariants, they lie in different equivalence classes. The following theorem tells that there are infinitely many such equivalence classes.

\vspace{0.2cm}

\begin{theo}\label{nesigma}
Any negative integer can be realized as the $\sigma$ invariant of a contact manifold.
\end{theo}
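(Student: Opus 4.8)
The plan is to exhibit, for each positive integer $n$, a negative definite plumbing tree $G_n$ with at most one bad vertex, together with a Stein structure $J_n$ on $X(G_n)$, such that the induced contact structure $\xi_n$ on $Y(G_n)$ satisfies $\sigma(Y(G_n),\xi_n)=-n$. Since $\sigma$ is monotone under the Stein cobordism ordering $\preceq$, distinct values of $\sigma$ force distinct equivalence classes, so producing the value $-n$ for each $n$ immediately gives infinitely many classes and proves the theorem; combined with $\sigma=-\infty$ for planar open books, every value in $\{-\infty\}\cup\mathbb{Z}_{<0}$ that we realize is a legitimate $\sigma$-value.

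The key computational step is to use Proposition \ref{repr}: under the isomorphism of Equation \ref{idenfloe}, $c^+(\xi_n)$ is represented by the dual of $c_1(J_n)$ inside the combinatorial model $\mathbb{K}^+(G_n)$. So computing $\sigma(Y(G_n),\xi_n)=-\max\{d : c^+(\xi_n)\in U^d\cdot HF^+(-Y(G_n))\}$ reduces to tracking how many times one can divide the class $[c_1(J_n)]$ by $U$ in $\mathbb{K}^+(G_n)/(\mathbb{Z}^{>0}\times\mathrm{Char}(G))$, which by the Ozsv\'ath--Szab\'o algorithm is a finite graph-theoretic search over characteristic covectors. The natural candidates are linear plumbing chains (so the tree is negative definite and has no bad vertices at all, trivially satisfying the hypothesis), for instance chains built from the $(-2)$-framed unknots with one vertex of framing $-n-$something, chosen so that the associated $4$-manifold carries a Stein structure whose $c_1$ sits at $U$-divisibility exactly $n$. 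One concrete route is to take the Stein fillings arising from Legendrian surgery diagrams and compute $d_3(\xi_n)$ and the correction term $d$; the $U$-divisibility of $c^+$ is then read off from the structure of $HF^+(-Y(G_n),\mathfrak{t})$, which for these plumbings is a sum of a "tower" $\mathcal{T}^+$ and a finite part, so $\sigma$ is governed by where the bottom of the tower sits relative to the grading of $c^+$.

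The main obstacle I expect is the bookkeeping in the combinatorial algorithm: one must verify that for the chosen family $G_n$ the class $[c_1(J_n)]$ really attains $U$-divisibility exactly $n$ and not more — i.e. that there is no characteristic covector realizing a further division — and that it is attained, i.e. the relevant "long" covector exists. This is where a clean choice of plumbing matters; a poorly chosen family could have $c^+$ divisible by all powers of $U$ (hence $\sigma=-\infty$) or could make the algorithm's recursion hard to control. I would handle this by picking a family where $HF^+$ is already known in the literature (e.g. lens spaces or Brieskorn-type spheres $\Sigma(2,3,6n\pm1)$ realized as such plumbing boundaries), so that the tower structure is explicit, and then only the identification of $c^+$ via Proposition \ref{repr} needs fresh verification. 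A secondary, more routine point is exhibiting the Stein structure itself: one writes down a Legendrian realization of the plumbing link with all framings one less than the Thurston--Bennequin number, which is possible precisely because each vertex's framing is sufficiently negative, and records the resulting $c_1(J_n)$ as an explicit characteristic covector to feed into the computation.
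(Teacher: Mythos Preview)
Your overall strategy---represent $c^{+}(\xi)$ by $c_1(J)$ via Proposition~\ref{repr} and read off the $U$--divisibility from the combinatorial model of $HF^{+}$---is exactly the shape of the paper's argument. But as written this is a plan, not a proof: you never exhibit a family realizing $\sigma=-n$. The concrete suggestions you do make are problematic. Linear plumbing chains bound lens spaces, which are $L$--spaces, so every nonzero element of $\ker(U)$ lies in $\operatorname{Im}(U^d)$ for all $d$ and $\sigma=-\infty$; these cannot produce finite negative values. For the Brieskorn candidates $\Sigma(2,3,6n\pm1)$ (and more generally for small Seifert spheres with nontrivial $HF_{\mathrm{red}}$), the standard negative--definite star--shaped plumbing has central weight $-1$, and a $(-1)$--framed unknot cannot be realized as a Legendrian unknot with framing $tb-1$. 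So your final sentence---``which is possible precisely because each vertex's framing is sufficiently negative''---is exactly where the plan breaks down for the interesting examples.

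The paper handles this obstruction by \emph{not} putting a Stein structure on the plumbing at all. It starts instead from Legendrian surgery on a suitably stabilized $(2,2n{+}1)$ torus knot, obtaining a Stein $4$--manifold $V$ with $\partial V=\Sigma(2,2n{+}1,4n{+}3)$ that is \emph{not} a plumbing; then it blows up $V$ to reach the star--shaped plumbing $X(G)$, which is only symplectic. Proposition~\ref{repr} therefore has to be invoked in the strengthened form of Remark~\ref{rem:repr} (via Ghiggini's extension of Plamenevskaya's theorem) for a weak/strong filling rather than a Stein filling. Tracking $c_1$ through the blow--ups identifies the contact class with one of Rustamov's generators $K_n$ or $K_{n+1}$, and Rustamov's explicit list of minimal relations
\[
U^{p_i}\otimes K_i \sim U^{\,p_i+q_{n-i}}\otimes K_{n+1},\qquad
U^{p_i}\otimes K_{n+i} \sim U^{\,p_i+q_{n-i}}\otimes K_{n}
\]
then gives $\sigma(Y_n,\xi_n)=-(p_n-1)$, which hits every negative integer. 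The two nontrivial ingredients you are missing are thus (i) the passage from a genuine Stein filling to a symplectic plumbing via blow--up (needed because the plumbing itself is not Stein), and (ii) a citation or computation of the full relation structure in $\mathbb{K}^{+}(G)$, not just the generators---your proposal acknowledges this second point but does not supply it.
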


In fact, we are going to obtain some contact manifolds with distinct $\sigma$ invariants by doing Legendrian surgery on certain stabilizations of some torus knots. See Theorem \ref{sigma} below. After completing the first draft of this paper, the author found an explicit formula for the $\sigma$ invariant of a contact manifold that is obtained by Legendrian surgery from $3$-sphere if the knot has $L$--space surgery \cite{K}. The formula depends only the Alexander polynomial, Thurston--Bennequin number and the rotation number of the surgery knot and it generalizes Theorem \ref{sigma}. The technique, however, is quite different than the one used here. 

\begin{rem}
Recently Latschev and Wendl defined an analogous invariant of contact manifolds, which they call \emph{algebraic torsion}, in arbitrary odd dimension within the context of Symplectic Field Theory, \cite{LW}. In dimension $3$, both invariants provide obstructions to exact symplectic cobordisms, so one may wonder if these two are somehow related. So far, we can not see an obvious relation, because Theorem 1.1 in \cite{LW} says that contact manifolds with algebraic torsion are not strongly fillable whereas our examples with finite $\sigma$ invariant are all Stein fillable. 
\end{rem}

\vspace{0.5cm}

This paper is organized as follows. In Section \ref{homology}, basic properties of Heegaard-Floer homology and  contact invariant are briefly reviewed. Section \ref{algorithm} is devoted to the algorithm of Ozsv\'ath and Szab\'o  to determine the generators of Heegaard-Floer homology of 3-manifolds given by plumbing diagrams. Remarks given at the end of the section allow us to find relations easily by combinatorial means. We prove Proposition \ref{repr} and Theorem \ref{rest} in Section \ref{theoremo}. Some examples are given in Section \ref{sample}. We discuss the planar obstruction in Section \ref{exampleo}. Theorem \ref{nesigma} is proved in Section \ref{sec:sigma}

\vspace{0.5 cm}

I would like thank my advisor Selman Akbulut for his patience and constant encouragement. I am grateful to Tolga Etg\"{u}, Matt Hedden and Yank\i \ Lekili for helpful conversations. A special thanks goes to Burak Ozbagci for his careful revision of the first draft of this paper. This work is supported by a Simons postdoctoral fellowship.

\section{Heegaard-Floer homology and contact invariant}\label{homology}

\vspace{0.3 cm}

Let $Y$ be a closed oriented $3$-manifold and $\mathfrak{t}$ be a $\mathrm{spin}^c$ structure on $Y$. In \cite{OS4} and \cite{OS5}, Ozsv\'ath and Szab\'o  define four versions of \emph{Heegaard-Floer homology} groups $\widehat{HF}(Y,\mathfrak{t})$,  $HF^+(Y,\mathfrak{t})$, $HF^-(Y,\mathfrak{t})$,and $HF^\infty(Y,\mathfrak{t})$. These groups are all smooth invariants of $(Y,\mathfrak{t})$.  When $Y$ is a rational homology sphere, they admit absolute $\mathbb{Q}$-gradings. The groups $HF^+$, $HF^-$, and $HF^\infty$ are also $\mathbb{Z}[U]$ modules where multiplication by $U$ decreases degree by $2$. Any $\mathrm{spin}^c$ cobordism $(X,\mathfrak{s})$ between $(Y_1,\mathfrak{t}_1)$ and $(Y_2,\mathfrak{t}_2)$ induces a homomorphism well defined up to sign

$$F_{X,\mathfrak{s}}^\circ:HF^\circ(Y_1,\mathfrak{t}_1)\to HF^\circ(Y_2,\mathfrak{t}_2)$$

\vspace{0.2cm}

Here $HF^\circ$ stands for any one of $\widehat{HF}$, $HF^+$, $HF^-$, or $HF^\infty$.  We work with $\mathbb{F}=\mathbb{Z}/2\mathbb{Z}$ coefficients in order to avoid sign ambiguities. 
Also, we drop the $\mathrm{spin}^c$ structure from the notation when we direct sum over all $\mathrm{spin}^c$ structures.

\vspace{0.5cm}

Given any contact structure $\xi$ on $Y$, Ozsv\'ath and Szab\'o  associate an element $c(\xi)\in \widehat{HF}(-Y)$ which is an invariant of isotopy class of $\xi$  \cite{OS2}. In this paper we are interested in the image $c^+(\xi)$ of $c(\xi)$ in $HF^+(Y)$ under the natural map. We list some of the properties of this element below.

\vspace{0.2cm}

\begin{enumerate}
\item $c^+(\xi)$ lies in the summand $HF^+(-Y,\mathfrak{t})$ where $\mathfrak{t}$ is the $\mathrm{spin}^c$ structure induced by $\xi$.
\item $c^+(\xi)=0$ if $\xi$ is overtwisted.
\item $c^+(\xi)\neq 0$ if $\xi$ is Stein fillable.
\item $c^+(\xi)\in \mathrm{Ker}(U)$.
\item $c^+(\xi)$ is homogeneous. When $Y$ is a rational homology sphere, it has  degree $-d_3(\xi)-1/2$, where $d_3(\xi)$ is the $3$-dimensional invariant of $\xi$.
\item $c^+(\xi)$ is natural under Stein cobordisms: If $W$ is a compact Stein manifold, $\partial W= Y'\cup -Y$, and $\xi '$ and $\xi$ are the induced contact structures, we can regard $W$ as a cobordism from $-Y'$ to $-Y$ and the induced map satisfies $F^+_{W}(c^+(\xi '))=c^+(\xi)$. 
\end{enumerate}

\vspace{0.5cm}

The contact invariant $c^+(\xi)$ is studied by Plamenevskaya in \cite{P}. The following result is to be used later in this paper when we prove our main theorem. We state it in a slightly more general form than in \cite{P} but Plamanevskaya's proof is valid for our case as well.

\vspace{0.2 cm}

%\newpage

\begin{theo}(Theorem 4 in \cite{P})\label{Plame}
\noindent Let X be a smooth compact $4$-manifold with boundary $Y = \partial X$.
Let $J$ be a Stein structure on $X$ that induces a $\mathrm{spin}^c$ structure $\mathfrak{s}_1$ on $X$
and contact structure $\xi_1$ on $Y$. Let $\mathfrak{s}_2$ be another $\mathrm{spin}^c$ structure on $X$ that does not necessarily come from a Stein structure.  Suppose that $\mathfrak{s}_1|_Y = \mathfrak{s_2}|_Y$ , but the $\mathrm{spin}^c$ structures $\mathfrak{s}_1$, $\mathfrak{s}_2$ are
not isomorphic. We puncture $X$ and regard it as a cobordism
from $Y$ to $S^3$. Then

\vspace{0.2 cm}

\begin{enumerate}
	\item $F^+_{X,\mathfrak{s_2}}(c^+(\xi_1))=0$
	\item $F^+_{X,\mathfrak{s_1}}(c^+(\xi_1))$ is a generator of $HF_0^+(S^3)$. 
\end{enumerate}
\end{theo}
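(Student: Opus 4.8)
The plan is to put $(X,J)$ into a Legendrian--surgery form and then reduce both assertions to the naturality of the Ozsv\'ath--Szab\'o contact invariant. By Eliashberg's handle description of Stein domains, $(X,J)$ has a handle decomposition with a single $0$--handle, some $1$--handles, and $2$--handles attached along a Legendrian link $\mathbb{L}$ in $\#^{k}(S^{1}\times S^{2})$ with framing $\mathrm{tb}-1$ on every component; equivalently $(Y,\xi_{1})$ is obtained from $(\#^{k}(S^{1}\times S^{2}),\xi_{\mathrm{std}})$ by Legendrian surgery along $\mathbb{L}$. Puncturing $X$ and turning it upside down, the induced cobordism map $F^{+}_{X,\mathfrak{s}}\colon HF^{+}(-Y)\to HF^{+}(S^{3})$ factors through $HF^{+}$ of $\#^{k}(S^{1}\times S^{2})$ via the cobordisms
\[
-Y\ \xrightarrow{\ W_{2}\ }\ \#^{k}(S^{1}\times S^{2})\ \xrightarrow{\ W_{3}\ }\ S^{3},
\]
where $W_{2}$ is the reversed Legendrian--surgery cobordism (still built from index--$2$ handles) and $W_{3}$ consists of the $3$--handles dual to the $1$--handles of $X$.

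For part (2) I would unwind Ozsv\'ath--Szab\'o's proof that Stein fillable contact structures have nonvanishing invariant. Naturality of $c^{+}$ under Stein cobordisms (property (6) of Section \ref{homology}), applied to the Stein cobordism $W_{2}$ with the canonical $\mathrm{spin}^{c}$ structure $\mathfrak{s}_{1}=\mathfrak{s}(J)$, gives $F^{+}_{W_{2},\mathfrak{s}_{1}}\bigl(c^{+}(\xi_{1})\bigr)=c^{+}(\xi_{\mathrm{std}})$ on $\#^{k}(S^{1}\times S^{2})$. The standard tight structure on $\#^{k}(S^{1}\times S^{2})$ is Stein fillable (by $\natural^{k}(S^{1}\times B^{3})$), and, by the behavior of the contact invariant under $1$--handle/$3$--handle cobordisms, $F^{+}_{W_{3}}$ sends $c^{+}(\xi_{\mathrm{std}})$ to $c^{+}(\xi_{\mathrm{std}}^{S^{3}})$, the generator of $HF^{+}_{0}(S^{3})$. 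Composing proves (2), and in particular fixes the degree shift of $F^{+}_{X,\mathfrak{s}_{1}}$.

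For part (1) I would first note that $HF^{+}$ of $\#^{k}(S^{1}\times S^{2})$ vanishes in every non-torsion $\mathrm{spin}^{c}$ structure, so any $\mathfrak{s}_{2}$ contributing nontrivially to $F^{+}_{X,\mathfrak{s}_{2}}$ must restrict to the torsion $\mathrm{spin}^{c}$ structure on the middle level, and the only relevant datum is $\mathfrak{s}_{2}|_{W_{2}}$. Then a grading computation: the degree shift of $F^{+}_{X,\mathfrak{s}_{2}}$ depends on $\mathfrak{s}_{2}$ only through $c_{1}(\mathfrak{s}_{2})^{2}$, and comparing with the value fixed in part (2) shows that $F^{+}_{X,\mathfrak{s}_{2}}\bigl(c^{+}(\xi_{1})\bigr)$ lands in degree $\tfrac14\bigl(c_{1}(\mathfrak{s}_{2})^{2}-c_{1}(J)^{2}\bigr)$. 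Since $c^{+}(\xi_{1})\in\mathrm{Ker}(U)$ while $U$ acts invertibly on $HF^{+}_{2m}(S^{3})$ for each $m\geq 1$, this image is either $0$ or the degree--$0$ generator, and the latter forces $c_{1}(\mathfrak{s}_{2})^{2}=c_{1}(J)^{2}$. To exclude that remaining case I would invoke an adjunction inequality for Stein domains --- for instance via the Lisca--Mati\'c embedding of $(X,J)$ into a closed minimal complex surface of general type, whose only Seiberg--Witten basic classes are $\pm K$, combined with $\mathfrak{s}_{2}|_{Y}=\mathfrak{s}_{1}|_{Y}$ --- which leaves only $\mathfrak{s}_{2}\cong\mathfrak{s}_{1}$. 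Note that no property of $\mathfrak{s}_{2}$ beyond its restriction to $Y$ is used, as the statement requires.

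\textbf{Expected main obstacle.} The delicate point is precisely the equal-square case, which is not settled by the formal grading/$U$-action manipulation: the conjugate structure $\overline{\mathfrak{s}_{1}}$ already has $c_{1}(\overline{\mathfrak{s}_{1}})^{2}=c_{1}(J)^{2}$ and restricts to $Y$ in the same way as $\mathfrak{s}_{1}$ whenever the boundary $\mathrm{spin}^{c}$ structure is self-conjugate. Ruling this out needs the full strength of an adjunction-type inequality for Stein fillings --- through the embedding into a closed K\"ahler surface, or through a direct count of the holomorphic triangles defining $F^{+}_{X,\mathfrak{s}_{2}}$ in a Heegaard diagram adapted to the open book of $(Y,\xi_{1})$. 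A secondary technical issue is making the $\mathrm{spin}^{c}$ bookkeeping across the factorization $W=W_{3}\circ W_{2}$ precise and verifying that the $1$--handle/$3$--handle maps behave as claimed on the relevant generators.
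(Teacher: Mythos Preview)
This theorem is not proved in the paper: it is quoted as Theorem~4 of Plamenevskaya~\cite{P}, accompanied only by the remark that although $\mathfrak{s}_2$ is here allowed to be an arbitrary $\mathrm{spin}^c$ structure (rather than one induced by a second Stein structure, as in \cite{P}), ``Plamenevskaya's proof is valid for our case as well.'' There is therefore no in-paper argument to compare your proposal against.

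For what it is worth, your outline is close to Plamenevskaya's actual argument. Part~(2) is precisely the naturality of $c^+$ under Stein cobordisms that you invoke. For part~(1) she does essentially what you arrive at in your ``expected obstacle'' paragraph: she uses the Lisca--Mati\'c embedding of $(X,J)$ as a holomorphic domain in a minimal projective surface $Z$ of general type with $b_2^+(Z)>1$, and then appeals to the Ozsv\'ath--Szab\'o closed $4$--manifold invariant $\Phi_{Z,\mathfrak{s}}$, which is nonzero only for the canonical $\mathrm{spin}^c$ structure and its conjugate. Composing the punctured $X$ with $Z\setminus X$ and using the composition law kills $F^+_{X,\mathfrak{s}_2}(c^+(\xi_1))$ for every non-canonical $\mathfrak{s}_2$ simultaneously, so the ``equal-square'' case you single out is not a separate step but is absorbed into the same embedding argument. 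Your preliminary grading reduction via $\tfrac14(c_1(\mathfrak{s}_2)^2-c_1(J)^2)$ is correct but superfluous once the embedding is in hand.
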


\vspace{0.2cm}

\noindent Note that in this theorem if the $\mathrm{spin}^c$ structures $\mathfrak{s}_1|_Y$ and  $\mathfrak{s_2}|_Y$ are not the same then conclusion (1) follows trivially.

\vspace{0.2 cm}

\begin{rem}
Theorem \ref{Plame} was later generalized by Ghiggini in \cite{Ghi} where he requires $J$ to be only an $\omega$-tame almost complex structure for some symplectic structure $\omega$ on $X(G)$ that gives a strong filling for the boundary contact structure. In this paper, we work with rational homology spheres. For these manifolds, any weak filling can be perturbed into a strong filling, \cite{OO}.
\end{rem}

\section{The Algorithm}\label{algorithm}

\vspace{0.3cm}

In this section, we review Ozsv\'ath--Szab\'o's combinatorial description of Heegaard Floer homology of plumbed $3$--manifolds given in \cite{OS1} to set our notation.  Proof of our main theorem heavily relies on the understanding the algebraic structure of their combinatorial description. Particularly one should understand the $U$--action in this combinatorial object. We shall  describe this action in Equation \ref{Uaction}. Strictly speaking, Ozsv\'ath and Szab\'o's algorithm determines only the part of the Heegaard-Floer homology group that lies in the kernel of $U$ map. In order to determine the full group, one should find all the minimal relations. Although these relations can be found in some special cases,  no general technique is known to find all of these relations. Towards the end of the section we discuss a systematic method  to find some (not necessarily minimal) relations. These relations will turn out to be minimal in the cases of interest (Example \ref{exam:star}).   

\vspace{0.2cm}

Let $G$ be a weighted graph. For every vertex $v$ of $G$, let $m(v)$ and $d(v)$ denote the weight of $v$ and the number of edges connected to $v$ respectively. A vertex $v$ is said to be a \emph{bad vertex} if $m(v)+d(v)>0$. Enumerating all vertices of $G$, one can form the \emph{intersection matrix} whose $i$th diagonal entry is $m(v_i)$ and $i-j$th entry is $1$ if there is an edge between $v_i$ and $v_j$, and is $0$ otherwise. Throughout, we assume that $G$ satisfies the following conditions.

\vspace{0.2cm}

\begin{enumerate}
\item $G$ is a connected tree.
\item The intersection matrix of $G$ is negative definite.
\item G has at most one bad vertex.
\end{enumerate}

\vspace{0.2cm}

There is a $4$-manifold $X(G)$ obtained by plumbing together disk bundles $D_i,\;i=1\cdots|G|$ over sphere where $D_i$ is plumbed to $D_j$ whenever there is an edge connecting $v_i$ to $v_j$. Let $Y(G)$ be the boundary of $X(G)$. In \cite{OS1}, Ozsv\'ath and Szab\'o  give a purely combinatorial description of Heegaard-Floer homology group $HF^+(-Y(G))$.  From now on, we identify $\mathrm{spin}^c$ structures on $4$--manifolds with their first Chern classes. Since all our $4$--manifolds are simply connected and have non-empty boundary, this does not cause any ambiguity. However, we should be careful in the $3$--manifold level when  $2$--torsion exists in the first homology. We will deal with such an example in Section  \ref{exampleo}(see Remark \ref{ambig}).

\vspace{0.5cm}

The second homology $H^2(X(G),\mathbb{Z})$ is a free module generated by vertices of $G$. Let $\mathrm{Char}(G)$ be the set all characteristic(co)vectors of this module, i.e. every element $K$ of $\mathrm{Char}(G)$ satisfies $\langle K,v\rangle=m(v) \; (\text{Mod} \; 2)$ for every vertex $v$. Let $\mathcal{T}^+$ be the graded algebra $\mathbb{F}[U,U^{-1}]/U\mathbb{F}[U]$ where the formal variable $U$ has degree $-2$. Form the set $\mathbb{H}^+(G)\subset\mathrm{Hom}(\text{Char}(G),\mathcal{T}^+)$ where any element $\phi$ of $\mathbb{H}^+(G)$ satisfies the following property; If $K$ is a characteristic vector, $v$ is a vertex, and $n$ is an integer such that
$$\langle K,v \rangle + m(v)=2n,$$
we have
$$U^{m+n}\phi(K+2\mathrm{PD}(v))=U^m\phi(K) \;\text{if}\;n>0,$$
or
$$U^{m}\phi(K+2\mathrm{PD}(v))=U^{m-n}\phi(K) \;\text{if}\;n<0.$$

\vspace{0.2 cm}

The set of $\mathrm{spin}^c$ structures on $Y(G)$ gives rise to a natural splitting for $\mathbb{H}^+(G)$. For, if $\mathfrak{t}$ is a $\mathrm{spin}^c$ structure on $Y(G)$, one can consider the subset $\mathrm{Char}_\mathfrak{t}(Y(G))$ consisting of those characteristic vectors whose restriction on $Y(G)$ are $\mathfrak{t}$. The set $\mathbb{H}^+(G,\mathfrak{t})$ is the set of all maps in $\mathbb{H}^+(G)$ with support $\mathrm{Char}_\mathrm{t}$. It is easy to see that  $ \mathbb{H}^+(G)=\bigoplus_\mathfrak{t}\mathbb{H}^+(G,\mathfrak{t})$ .

The group $\mathbb{H}^+(G)$ is graded in the following way. An element $\phi \in \mathbb{H}^+(G)$ is said to be homogeneous of degree $d$ if for every characteristic vector $K$ with $\phi (K) \neq 0$, $\phi (K) \in \mathcal{T}^+$ is a homogeneous element with

$$\mathrm{deg}(\phi(K))-\frac{K^2+|G|}{4}=d.$$ 

\vspace{0.1cm}

We are ready to describe the isomorphism relating $\mathbb{H}^+(G)$ to the Heegaard-Floer homology of $Y(G)$. Fix a $\mathrm{spin^c}$ structure $\mathfrak{t}$ on $-Y(G)$. Let $K$ be a characteristic vector on $\mathrm{Char}_\mathfrak{t}(G)$. Puncture $X(G)$ and regard it as a cobordism form $-Y(G)$ to $S^3$. It is known that $X(G)$ and $K$ induce a homomorphism

$$F_{X(G),K}:HF^+(-Y(G),\mathfrak{t})\to HF^{+}(S^3) \simeq \mathcal{T}^+.$$

\vspace{0.1cm}

Now the map $T^+:HF^+(-Y(G),\mathfrak{t})\to \mathbb{H}^+(G,\mathfrak{t})$ is defined by the rule $T^+(\xi)(K)=F_{X(G),K}(\xi)$

\begin{theo}(Theorem 2.1 in \cite{OS1})
$T^+$ is a $U$-equivariant isomorphism preserving the absolute $\mathbb{Q}$-grading.
\end{theo}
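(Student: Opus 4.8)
The plan is to proceed in three steps, following the original argument in \cite{OS1}: first show that $T^+$ lands in $\mathbb{H}^+(G,\mathfrak{t})$, then that it intertwines the $U$-actions and the absolute gradings, and finally that it is bijective. The first two steps are formal consequences of the general properties of cobordism maps; the third is the main point.

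First I would check that $T^+(\xi)\in\mathbb{H}^+(G,\mathfrak{t})$ for $\xi\in HF^+(-Y(G),\mathfrak{t})$. Since $X(G)$ is simply connected with nonempty boundary, a characteristic covector $K$ determines a unique $\mathrm{spin}^c$ structure, so $T^+(\xi)(K)=F_{X(G),K}(\xi)\in HF^+(S^3)\cong\mathcal{T}^+$ is well defined, and it is supported on $\mathrm{Char}_{\mathfrak t}(G)$ because $F_{X(G),K}$ vanishes on $HF^+(-Y(G),\mathfrak{t})$ unless $K$ restricts to $\mathfrak{t}$. To verify the defining relation, fix a vertex $v$ with $\langle K,v\rangle+m(v)=2n$. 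The embedded sphere $S_v\subset X(G)$ representing $v$ has $[S_v]^2=m(v)$, and the $\mathrm{spin}^c$ structures labelled by $K$ and by $K+2\,\mathrm{PD}(v)$ differ by $2\,\mathrm{PD}([S_v])$. Splitting off the portion of $X(G)$ containing $S_v$ and combining the composition law for cobordism maps with the standard computation of the map induced by a single $2$-handle attachment (which is analyzed through the integer surgery exact triangle), one obtains $U^{m+n}F_{X(G),K+2\mathrm{PD}(v)}(\xi)=U^m F_{X(G),K}(\xi)$ when $n>0$ and the symmetric identity when $n<0$ --- precisely the relations in the definition of $\mathbb{H}^+(G)$.

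Next, $U$-equivariance is immediate: the maps $F_{X(G),K}$ are $U$-equivariant, and the $U$-action on $\mathbb{H}^+(G)$ is postcomposition with multiplication by $U$ on $\mathcal{T}^+$. For the grading, recall that after puncturing $X(G)$ and viewing it as a cobordism from $-Y(G)$ to $S^3$, the map $F_{X(G),K}$ raises the absolute $\mathbb{Q}$-grading by $\frac14\bigl(K^2-2\chi+3|G|\bigr)$, where the punctured cobordism has Euler characteristic $\chi=|G|$ (one $0$-handle, $|G|$ two-handles, one ball removed) and signature $-|G|$. This shift equals $\frac14(K^2+|G|)$, so $\deg T^+(\xi)(K)=\deg\xi+\frac14(K^2+|G|)$, which is exactly the statement that $T^+(\xi)$ is homogeneous of degree $\deg\xi$ in $\mathbb{H}^+(G)$; hence $T^+$ preserves the grading.

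The main obstacle is bijectivity, which I would prove by induction on a suitable complexity of $G$. Choosing a vertex $v$, one relates $Y(G)$ to the plumbed manifolds of the graphs obtained by deleting $v$ or by decreasing $m(v)$, which yields a surgery exact triangle on the $HF^+$ side; one then constructs by hand a parallel long exact sequence for the combinatorial groups $\mathbb{H}^+$, checks that $T^+$ is a morphism of exact sequences, and concludes with the five lemma. The base case is a single unknotted vertex, where $Y=S^3$ and both sides are $\mathcal{T}^+$ with $T^+$ visibly an isomorphism. The standing hypotheses --- $G$ a connected negative definite tree with at most one bad vertex --- are exactly what ensures that the intermediate graphs arising in the induction still meet these assumptions (or reduce to the base case), so that the combinatorial exact sequence has the required shape; this is the delicate part of the argument, and for its details I would refer to \cite{OS1}.
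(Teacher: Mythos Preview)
The paper does not give a proof of this theorem; it is simply quoted from \cite{OS1} in the review section and used as a black box. So there is no ``paper's own proof'' against which to compare. Your sketch does follow the overall architecture of the original argument in \cite{OS1}: the relations defining $\mathbb{H}^+(G)$ are deduced from properties of the cobordism maps associated to embedded spheres, $U$-equivariance and the grading shift are formal, and bijectivity is obtained by an induction on the graph which compares the surgery exact triangle for $HF^+$ with a hand-built combinatorial analogue for $\mathbb{H}^+$, concluding with the five lemma.

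Two caveats about accuracy relative to \cite{OS1}. First, in your verification that $T^+(\xi)$ satisfies the defining relation, the identity $U^{m+n}F_{X(G),K+2\mathrm{PD}(v)}(\xi)=U^{m}F_{X(G),K}(\xi)$ does not quite follow by ``splitting off the portion of $X(G)$ containing $S_v$'' and invoking the surgery triangle for a single $2$-handle. In \cite{OS1} this adjunction-type relation is proved by blowing up and tubing $S_v$ with exceptional spheres so as to reduce to a sphere of square $-1$, where the blow-up formula gives the needed constraint; your description is too vague to pin this down. Second, in the inductive step for bijectivity, the complexity in \cite{OS1} is the number of vertices together with a badness count, and the vertex $v$ to delete or to decrease must be chosen carefully (a leaf, or the unique bad vertex) so that the intermediate graphs remain in the allowed class; ``choosing a vertex $v$'' in general, as you phrased it, can leave the hypotheses.
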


To simplify the calculations, we work with the dual of $\mathbb{H}^+(G)$. Let $\mathbb{K}^+$ be the quotient set $\mathbb{Z}^{\geq 0}\times \mathrm{Char}(G)/\sim$, where the equivalence relation $\sim$ is defined as follows. Denote a typical element of $\mathbb{Z}^{\geq 0}\times \mathrm{Char}(G)$ by $U^m \otimes K$(We drop $U^m\otimes\;$ from our notation if $m=0$).  Let $v$ be a vertex and $n$ be an integer such that
$$ 2n= \langle K,v \rangle + m(v)$$
then we have 
$$U^{m+n} \otimes (K+2\mathrm{PD}(v)) \sim U^{m}\otimes K \;\mathrm{if}\;\mathrm{ n\geq 0}$$
or
$$U^{m}\otimes(K+2\mathrm{PD}(v) \sim U^{m-n}\otimes K \;\mathrm{if}\;\mathrm{ n< 0}.$$

\vspace{0.2cm}

Define a pairing $\mathbb{K}^+(G)\times \mathbb{H}^+(G)\to \mathbb{Z}$ by $(\phi,U^m\otimes K) \to (U^m\phi(K))_0$ where $()_0$ denotes the projection to the degree 0 subspace of $\mathcal{T}^+$. It is possible to show that this pairing is well defined and non-degenerate and hence it defines an isomorphism between $\mathbb{H}^+(G)$ and $\mathrm{Hom}(\mathbb{K}^+(G),\mathbb{Z})$. Using the duality map and isomorphism $T^+$ one can identify $\ker U^{n+1} \subset HF^+(-Y(G))$ as a quotient of $\mathbb{K}^+(G)$ for every $n\geq 0$.

\begin{lemm}\label{finmod}(Lemma 2.3 in \cite{OS1}) Let $B_n$ denote the set of characteristic vectors $B_n=\{K\in\mathrm{Char(G)}:\forall v \in G, |\langle K,v\rangle | \leq -m(v)+2n\}$. The quotient map induces a surjection from
$$\bigcup_{i=0}^n U^i\otimes B_{n-i}$$
onto the quotient space
$$\frac{\mathbb{K}^+(G)}{\mathbb{Z}^{>n}\times \mathrm{Char}(G)}.$$
In turn, we have an identification

\begin{equation}\label{modeliso}
\mathrm{Hom}\left ( \frac{\mathbb{K}^+(G)}{\mathbb{Z}^{>n}\times\mathrm{Char(G)}},\mathbb{F} \right)\simeq \ker U^{n+1}\subset \mathbb{H}^+(G).
\end{equation}
\end{lemm}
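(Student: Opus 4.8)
The plan is to prove the two assertions in turn: first that the quotient map restricted to $\bigcup_{i=0}^{n}U^{i}\otimes B_{n-i}$ surjects onto $\mathbb{K}^{+}(G)/(\mathbb{Z}^{>n}\times\mathrm{Char}(G))$, and then that the identification \eqref{modeliso} follows by dualizing the non-degenerate pairing $\mathbb{K}^{+}(G)\times\mathbb{H}^{+}(G)\to\mathbb{F}$ recorded above. The second assertion is essentially formal once the pairing is available; the first is a short reduction whose only subtlety is the numerical calibration of the sets $B_{n-i}$.

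For the surjectivity I would argue as follows. Let a class of $\mathbb{K}^{+}(G)$ be represented by $U^{m}\otimes K$. If $m>n$ the representative already lies in $\mathbb{Z}^{>n}\times\mathrm{Char}(G)$, so the class is trivial in the quotient; thus assume $m\leq n$. If $K\in B_{n-m}$ then $U^{m}\otimes K\in U^{m}\otimes B_{n-m}$ and we are done. Otherwise some vertex $v$ satisfies $|\langle K,v\rangle|>-m(v)+2(n-m)$. If $\langle K,v\rangle>-m(v)+2(n-m)$, set $2n'=\langle K,v\rangle+m(v)$; then $2n'>2(n-m)$ forces $n'\geq n-m+1$, so the defining relation of $\sim$ gives $U^{m}\otimes K\sim U^{m+n'}\otimes(K+2\mathrm{PD}(v))$ with $m+n'\geq n+1$, and the class is again represented by an element of $\mathbb{Z}^{>n}\times\mathrm{Char}(G)$. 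If instead $\langle K,v\rangle<m(v)-2(n-m)$, applying the relation at $v$ to the characteristic vector $K-2\mathrm{PD}(v)$ produces, for the associated integer $\tilde n<0$ with $|\tilde n|\geq n-m+1$, an equivalence $U^{m}\otimes K\sim U^{m-\tilde n}\otimes(K-2\mathrm{PD}(v))$ with $m-\tilde n\geq n+1$, and the same conclusion holds. In all cases the class of $U^{m}\otimes K$ either lies in the image of $\bigcup_{i=0}^{n}U^{i}\otimes B_{n-i}$ or is trivial in the quotient, which is precisely the asserted surjection.

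For \eqref{modeliso} I would start from the stated identification of $\mathbb{H}^{+}(G)$ with $\mathrm{Hom}(\mathbb{K}^{+}(G),\mathbb{F})$ via $(\phi,U^{m}\otimes K)\mapsto(U^{m}\phi(K))_{0}$, and determine which $\phi$ annihilate the submodule generated by $\mathbb{Z}^{>n}\times\mathrm{Char}(G)$. Writing $\phi(K)=\sum_{j\geq 0}a_{j}(K)\,U^{-j}$ in $\mathcal{T}^{+}$ one has $(U^{m}\phi(K))_{0}=a_{m}(K)$, so $\phi$ pairs trivially with every $U^{m}\otimes K$ having $m>n$ if and only if $a_{m}(K)=0$ for all $m>n$ and all $K$, i.e.\ if and only if $U^{n+1}\phi(K)=0$ for every $K$, i.e.\ if and only if $\phi\in\ker U^{n+1}$. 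Hence $\ker U^{n+1}\subset\mathbb{H}^{+}(G)$ is exactly the annihilator of $\mathbb{Z}^{>n}\times\mathrm{Char}(G)$, which is canonically $\mathrm{Hom}(\mathbb{K}^{+}(G)/(\mathbb{Z}^{>n}\times\mathrm{Char}(G)),\mathbb{F})$; transporting through $T^{+}$ yields the same statement for $\ker U^{n+1}\subset HF^{+}(-Y(G))$, and combining with the first part displays this group as the dual of a quotient of the explicit finite set $\bigcup_{i=0}^{n}U^{i}\otimes B_{n-i}$.

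The point that I expect to require the most care is not the reduction itself but the bookkeeping around the pairing: one must interpret $\mathrm{Hom}$ in the graded sense in which $\mathbb{H}^{+}(G)\cong\mathrm{Hom}(\mathbb{K}^{+}(G),\mathbb{F})$ genuinely holds, and check that quotienting by the submodule generated by $\mathbb{Z}^{>n}\times\mathrm{Char}(G)$ is compatible with passing to annihilators. This is where the standing hypothesis that the intersection form be negative definite enters: it forces each $B_{n}$ to be finite and each $\mathbb{Q}$-graded summand of $\mathbb{K}^{+}(G)$ to be finite-dimensional, so that the graded dual behaves like an ordinary dual. The only genuinely computational point, namely that the bound defining $B_{n-m}$ is calibrated so that $B_{n-m}$ captures every class at or below level $n$ while a single application of a relation lifts any non-member past level $n$, is the content of the reduction carried out above.
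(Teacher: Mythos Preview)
Your argument is correct and is essentially the proof given in \cite{OS1}; the present paper does not supply its own proof but simply quotes the lemma. Your reduction for surjectivity---pushing any $U^m\otimes K$ with $K\notin B_{n-m}$ past $U$-level $n$ by a single application of the defining relation at an offending vertex---is exactly the mechanism in Ozsv\'ath--Szab\'o's original argument, and your identification of $\ker U^{n+1}$ with the annihilator of $\mathbb{Z}^{>n}\times\mathrm{Char}(G)$ under the pairing is the standard duality step. The commentary on graded duals and the role of negative definiteness in making $B_n$ finite is accurate and appropriate.
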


One should regard the above isomorphism as one between $\mathbb{F}[U]$ modules where the $U$ action on the left hand side of equation \ref{modeliso} is defined by the following relation.

\vspace{0.2 cm}

\begin{equation}\label{Uaction}
U.(U^p\otimes K)^{*}(U^r\otimes K')=  \left \{
\begin{array}{cc}
1 & \;\;\; \mathrm{if}\; U^p\otimes K \sim U^{r+1}\otimes K'\\
  &\\
0 & \;\;\; \mathrm{if}\; U^p\otimes K \not \sim U^{r+1}\otimes K'
\end{array}\right.
\end{equation}

\noindent Where $(U^p\otimes K)^{*}$ denotes the dual of $U^p\otimes K$.

\vspace{0.2 cm}

Lemma \ref{finmod} gives us a finite model for $\ker U^{n+1}$ for every $n\geq 0$. It is known that these groups stabilize to give $HF^+$. Therefore, one can understand $HF^+$ by studying the quotients $\mathbb{K}^+(G)/\mathbb{Z}^{\geq n}\times\mathrm{Char}(G)$ for all $n\geq 0$. The first quotient is well understood thanks to an algorithm of Ozsv\'ath and Szab\'o . Below, we describe the algorithm and discuss a possible extension.

\newpage

A characteristic vector $K$ is called an \emph{initial vector} if for every vertex $v$, we have

\begin{equation}
m(v)+2\leq \langle K,v \rangle \leq -m(v) \label{ini}
\end{equation}

\vspace{0.2 cm}

Start with an initial vector $K_0$. Form a sequence $(K_0,K_1,\cdots,K_n)$ of characteristic vectors as follows: $K_{i+1}$ is obtained from $K_i$ by adding $2\mathrm{PD}(v)$ where $v$ is a vertex with $\langle K_i,v\rangle=-m(v)$. The terminal vector $K_n$ satisfies one of the following.

\vspace{0.2 cm}

\begin{enumerate}
\item\label{pro1} $m(v)\leq \langle K_n,v \rangle \leq -m(v)-2$ for all $v$.
\item\label{pro2} $\langle K_n,v \rangle > -m(v)$ for some $v$.
\end{enumerate}

\vspace{0.2 cm} 

The sequence $(K_0,K_1,\cdots,K_n)$ is called a \emph{full path}, and characteristic vector ${K_n}$ is called the \emph{terminal vector} of the full path. We say that a full path is called \emph{good} if its terminal vector satisfies property (\ref{pro1}) above and it is \emph{bad} if the terminal vector satisfies (\ref{pro2}). We list some of the properties of full paths, the reader can consult \cite{OS1}(especially proposition 3.1 in \cite{OS1}) for proofs.

\vspace{0.2 cm}

\begin{itemize}
\item  Two characteristic vectors in $B_0$ are equivalent in $\mathbb{K}^+(G)$ if and only if there is a full path containing both of them where the set $B_0$ is defined as in lemma \ref{finmod}.
\item If an initial vector $K_0$ has a good full path then any other full path starting with $K_0$ is good.
\item If $K_0$ and $K_0'$ are initial vectors having good full paths and $K_0\neq K_0'$ then  $K_0 \not \sim K_0'$ in $\mathbb{K}^+(G)$.
\item A terminal vector $K_n$ of a bad full path is equivalent to $U^m\otimes K'$ in $\mathbb{K}^+(G)$ for some $m>0$ and $K'\in \mathrm{Char}(G)$. A terminal vector of good full path can not be equivalent to such an element of $\mathbb{H}^+(G)$.
\end{itemize}

\vspace{0.2 cm}

Note that these properties allow us to find the generators of $\ker U$; They are simply the initial vectors having good full paths. In other words, we know the generators of the lowest grade subgroup of $HF^+(-Y(G))$. Recall from \cite{OS3} that the lowest degree $d(Y,\mathfrak{t})$ of non-torsion elements in $HF^+(Y,\mathfrak{t})$ is called the \emph{correction term} for a $\mathrm{spin}^c$ manifold $(Y,\mathfrak{t})$. The algorithm above provides us  an efficient method  to calculate the correction term $d(-Y(G),\mathfrak{t})$ for any $\mathrm{spin}^c$ structure $\mathfrak{t}$ (see Corollary 1.5 of \cite{OS1})

\vspace{-0.2cm}

\begin{equation}\label{correct}
d(-Y(G),\mathfrak{t})=\mathrm{min} -\frac{K^2+|G|}{4}
\end{equation}

\vspace{0.2cm}

\noindent where the minimum is taken over all characteristic vectors admitting good full paths which induce the $\mathrm{spin}^c$ structure $\mathfrak{t}$.

\vspace{0.5cm}

The whole group $\mathbb{H}^+(G)\simeq HF^+(-Y(G))$ is determined by the relations amongst the generators of $\mathrm{Ker}(U)$. Given two characteristic vectors $K_i$, $K_j$ admitting good full paths and inducing the same $\mathrm{spin}^c$ structure on $Y(G)$, a \emph{relation} between  $K_1$ and $K_2$ is a pair of integers $(n,m)$  satisfying $U^n\otimes K_1\sim U^m\otimes K_2$. If the non negative integers $(n,m)$ are minimal with that property, we call the corresponding relation \emph{minimal}. Here we describe a systematic method to find relations. Say $K$ is a characteristic  vector and $n$ is a positive integer. We want to understand the equivalence class in $\mathbb{K}^+(G)$ containing $U^n \otimes K$. We define three operations that do not change this equivalence class.

\vspace{0.3cm}

\textbf{(R1)} $U^n \otimes K' \to U^n \otimes K$ 
where $K'$ is obtained from $K$ by applying the algorithm to find full paths.

\textbf{(R2)} $U^n \otimes K \to U^{n-1} \otimes (K+2\mathrm{PD}(v))$ where $v$ is a vertex with $ \langle K,v \rangle + m(v) = -2$

\textbf{(R3)} $U^n \otimes K \to U^{n+1} \otimes (K+2\mathrm{PD}(v))$ where $v$ is a vertex with $ \langle K_n,v \rangle +m(v) = 2$

\vspace{0.3cm}

Now assume that $K$ is a characteristic initial vector which admits a good full path. In order to find particular representatives with small $U$-depths for the equivalence class containing $U^n\otimes K$ we apply \textbf{R1} then apply \textbf{R2} if possible else \textbf{R3}. Then we repeat the same procedure till it terminates at an element $U^r\otimes K'$. We call the vector part of this element as a \emph{root vector} (the exponent $m$ is determined by $n$ and degrees of $K$ and $K'$). A root vector is not unique, it depends upon  choices we made along the way; like  the choice of the vertex at which we apply \textbf{R2} or \textbf{R3} is applied. However, the set of root vectors is a finite set which can be found easily and it can be used to establish relations amongst the generators of $\mathrm{Ker}(U)$. This simple observation will be useful when we do our calculations.

\vspace{0.2 cm}

\begin{prop}
Let $K_1$ and $K_2$ be two characteristic initial vectors admitting good full paths. Suppose $n$ and $m$ are non-negative integers such that the root vector sets of $U^n\otimes K_1$ and $U^m\otimes K_2$ intersect non trivially. Then we have $U^n\otimes K_1 \sim U^m\otimes K_2$.
\end{prop}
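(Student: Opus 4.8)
The plan is to show that each of the moves \textbf{R1}, \textbf{R2}, \textbf{R3} is an instance of the relation $\sim$ defining $\mathbb{K}^{+}(G)$, and then to concatenate them. This verification is exactly the computation announced before the statement: a step $K_{i+1}=K_i+2\mathrm{PD}(v)$ of the full-path algorithm satisfies $\langle K_i,v\rangle+m(v)=0$, so the defining relation yields $U^{0}\otimes K_{i+1}\sim U^{0}\otimes K_i$; iterating along the path and adding a fixed power of $U$ to both sides (which $\sim$ manifestly permits) gives $U^{n}\otimes K'\sim U^{n}\otimes K$, i.e.\ \textbf{R1}. The hypotheses $\langle K,v\rangle+m(v)=-2$ and $=+2$ of \textbf{R2} and \textbf{R3} place us in the two clauses of the definition with integer $-1$ and $+1$ respectively, and reading those clauses off gives precisely $U^{n-1}\otimes(K+2\mathrm{PD}(v))\sim U^{n}\otimes K$ and $U^{n+1}\otimes(K+2\mathrm{PD}(v))\sim U^{n}\otimes K$. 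Hence \textbf{R1}--\textbf{R3} each leave the $\sim$-class unchanged.

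Granting this, the proposition is immediate up to one bookkeeping point. Running ``apply \textbf{R1}, then \textbf{R2} if possible else \textbf{R3}, and repeat'' on $U^{n}\otimes K_1$ terminates (the procedure is finite, as noted above) at an element $U^{r_1}\otimes K^{*}$ with $K^{*}$ a root vector; since every step is one of \textbf{R1}--\textbf{R3}, we get $U^{n}\otimes K_1\sim U^{r_1}\otimes K^{*}$, and similarly $U^{m}\otimes K_2\sim U^{r_2}\otimes K^{*}$ for the common root vector $K^{*}$. One then wants $r_1=r_2$, so that $U^{r_1}\otimes K^{*}=U^{r_2}\otimes K^{*}$ and transitivity finishes. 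For this I would use that $\mathbb{K}^{+}(G)$ carries a grading, dual to that of $\mathbb{H}^{+}(G)$, under which $U^{a}\otimes K$ has degree $\tfrac{K^{2}+|G|}{4}-2a$ and which is $\sim$-invariant: indeed $(K+2\mathrm{PD}(v))^{2}=K^{2}+4\bigl(\langle K,v\rangle+m(v)\bigr)$, so a move of $\sim$ changes $\tfrac{K^{2}+|G|}{4}$ and $-2a$ by opposite amounts. Therefore the $U$-depth at which a given root vector is reached from $U^{n}\otimes K_1$ is determined by $n$, $K_1$ and that vector; reading ``the root vector sets intersect non-trivially'' as the statement that some $U^{r}\otimes K^{*}$ occurs in both (equivalently, adding the check $\tfrac{K_1^{2}+|G|}{4}-2n=\tfrac{K_2^{2}+|G|}{4}-2m$) forces $r_1=r_2$ and completes the argument.

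The one point that genuinely needs care is this last one. A common root \emph{vector} alone is not enough: $U^{r_1}\otimes K^{*}$ and $U^{r_2}\otimes K^{*}$ are inequivalent as soon as $r_1\neq r_2$, precisely because $\sim$ preserves the grading above. So in the final write-up one must carry the $U$-depth along with the vector part (or, equivalently, record the grading hypothesis $\tfrac{K_1^{2}+|G|}{4}-2n=\tfrac{K_2^{2}+|G|}{4}-2m$); in the applications of this paper that is automatic, since there one compares elements lying in a prescribed degree. Everything else is the mechanical check that \textbf{R1}--\textbf{R3} instantiate the defining relation of $\mathbb{K}^{+}(G)$, together with the finiteness of the root-vector procedure.
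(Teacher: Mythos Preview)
Your argument is correct and is exactly what the paper intends; its own proof is the single line ``Follows from the definitions,'' and your verification that each of \textbf{R1}--\textbf{R3} instantiates the generating relation of $\sim$ is precisely the unpacking of that line.

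Your careful treatment of the $U$-depth matching via the grading invariant $\tfrac{K^2+|G|}{4}-2a$ actually goes beyond the paper. The paper acknowledges this point only in the parenthetical ``(the exponent $m$ is determined by $n$ and degrees of $K$ and $K'$)'' just before the proposition, which is exactly your computation $(K+2\mathrm{PD}(v))^2=K^2+4(\langle K,v\rangle+m(v))$. Your observation that the statement, read literally, needs either this grading hypothesis $\tfrac{K_1^{2}+|G|}{4}-2n=\tfrac{K_2^{2}+|G|}{4}-2m$ or else the $U$-depth carried along with the root vector to be airtight is a fair critique of the phrasing; in all of the paper's applications the elements being compared lie in a fixed degree, so the issue never bites, but you have isolated the point more precisely than the paper does.
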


\begin{proof}
Follows from the definitions.
\end{proof}

\vspace{0.2 cm}

\section{Main theorem}\label{theoremo}
\textbf{ Proof of Proposition \ref{repr}.} Let $\mathfrak{s}$ be the canonical $\mathrm{spin}^c$ structure  and $\mathfrak{s}'$ be any other $\mathrm{spin}^c$ structure on $X(G)$. Note that $c_1(\mathfrak{s})\not \sim c_1(\mathfrak{s}')$. 
Recall that the isomorphism $\displaystyle \mathrm{Ker}(U)\simeq\mathrm{Hom}\left ( \frac{\mathbb{K}^+(G)}{\mathbb{Z}^{>0}\times\mathrm{Char}(G)},\mathbb{F}\right )$ is given by means of the pairing 

$$P:\displaystyle \mathrm{Ker}(U)\times\mathrm{Hom}\left ( \frac{\mathbb{K}^+(G)}{\mathbb{Z}^{>0}\times\mathrm{Char}(G)},\mathbb{F}\right )\to\mathbb{F}$$
$$P(a,L)=(F^+_{X(G),L}(a))_0$$

\noindent In view of this observation, it is enough to show the following two equations hold.

\begin{eqnarray}
(F^+_{X(G),\mathfrak{s}}(c(\xi)))_0&=&1\label{eq:rep1} \\
(F^+_{X(G),\mathfrak{s}'}(c(\xi))_0&=&0\label{eq:rep2}
\end{eqnarray}

\noindent These are simply the conclusions of Theorem \ref{Plame}.
\begin{flushright}
$\Box$
\end{flushright}

\textbf{Proof of Theorem \ref{rest}.}
Let $K=c_1(J)$.  By Theorem  \ref{goss} and Proposition \ref{repr}, it is enough to show that  $K^* \notin \mathrm{Im}(U^k)$ for some $k \in \mathbb{N}$. To do that we will use the the identification in Equation \ref{modeliso}, keeping in mind that the $U$ action is determined by  Equation \ref{Uaction}. Let $\{K_1,K_2,\cdots,K_r\}$ be the set of characteristic initial vectors  admitting good paths such that $\mathrm{deg}(K_i^*)\leq\mathrm{deg}(K^*)=-d_3(\xi)-1/2$ and $K_i|_{Y(G)}=\mathfrak{t}$ for all $i=1\cdots r$. Basic properties of the contact invariant imply that this set is not empty if one of the assumptions is satisfied. It is known that on any rational homology sphere and for any $\mathrm{spin}^c$ structure, the Heegaard-Floer homology decomposes as  $HF^+=\mathcal{T}^+\oplus HF_{\mathrm{red}}$ . This decomposition tells that in large even degrees the Heegaard-Floer homology is generated by a single element. So, one can find integers $n_0,n_1,\cdots,n_r$ such that
$$U^{n_0}\otimes K\sim U^{n_1}\otimes K_1\sim\cdots\sim U^{n_r}\otimes K_r.$$  
Moreover, by choosing these numbers large enough, we can guarantee that the dual of $U^{n_0}\otimes K$ is the unique generator of the degree $-d_3(\xi)-1/2+2n_0$ subspace of $HF^+(-Y)$. Then by Equation \ref{Uaction},

$$U^{n_0}(U^{n_0}\otimes K)^*= K^*+ (U^{n_1-n_0}\otimes K_1)^*+\cdots+ (U^{n_r-n_0}\otimes K_r)^*.$$

\noindent Therefore $K\notin\mathrm{Im}(U^{n_0})$.
\begin{flushright}
$\Box$
\end{flushright}

\vspace{-0.2cm}
\section{Examples}\label{sample}

In this section, we shall discuss two examples. These examples have no particular importance on their, own but they are simple enough  to give a clear explanation of the ideas used in this paper.

\begin{exam}
Let $G$ be the graph indicated in Figure \ref{fig:config2}. Index the vertices so that the central one comes first. Our aim is to find all the characteristic co--vectors in the intersection lattice of $X(G)$ which admit good full path. We will denote each $K\in H^2(X(G))$ as a row vector $[\langle K, v_1\rangle,\cdots,\langle K, v_4 \rangle]$, where $v_i$ is the homology class of the sphere corresponding to  the $i$th vertex for all $i=1,\cdots,4$. If $K$ is characteristic and satisfies Inequality \ref{ini} then $\langle K,v_i\rangle=0,\;\mathrm{or}\; 2$ for every $i$. So we need  to find out which of the possible $16$ co-vectors admit good full paths. To represent full paths, we indicate the index of the vertex whose twice Poincare dual is added to the characteristic vector.  The algorithm terminates at the very first step for $K_1=[0,0,0,0]$. For $K_2=[0,2,0,0]$, we have the following good full path $2,1,3,4,1,2$. By  symmetry, $K_3=[0,0,2,0]$ and $K_4=[0,0,0,2]$ also admit good full path. For $[2,0,0,0]$, the full path $1,2,3,4$ terminates at a bad vector. Also it is easy to show that if $\langle K,v_i\rangle =2$ for more than one $i$ values then $K$ admits a bad full path.  Therefore $K_1,\cdots, K_4$ are the only characteristic co--vectors admitting good full path.

\begin{figure}[h]
\begin{center}
	\includegraphics[width=0.15\textwidth]{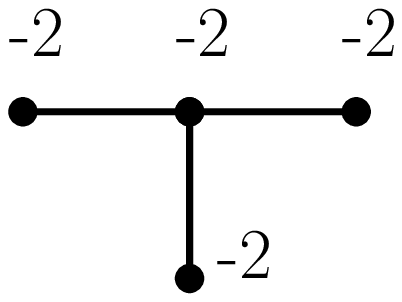}
		\caption{}
		\label{fig:config2}
	\end{center}
\end{figure}

\vspace{0.2cm}

Next we claim that each one of $K_1,\cdots,K_4$ restricts to a different $\mathrm{spin}^c$ structure $\mathfrak{t}_1,\cdots,\mathfrak{t}_4$ on the boundary. One way of seeing this is to apply the criterion mentioned in Remark \ref{ambig}.  Another way is the following: Recall that the set of $\mathrm{spin}^c$ structures on any $3$--manifold can be identified with its first homology. In this case the first homology of $Y(G)$ is given by $\mathbb{Z}^4/\mathrm{Im}I(G)$, where $I(G)$ is the intersection matrix. Observe that $\mathrm{det}(I(G))=4$, so $-Y(G)$ has  $4$  $\mathrm{spin}^c$ structures. Each one of these $\mathrm{spin}^c$ structures are torsion so the Heegaard-Floer homology of $-Y(G)$ is non-trivial in the corresponding component. Since we have exactly $4$ co--vectors contributing the Heegaard-Floer homology they must lie in different $\mathrm{spin}^c$ components. This shows $-Y(G)$ (and hence $Y(G)$) is an $L$--space (i.e. its Heegaard-Floer homology is the same as a Lens space).

\vspace{0.2cm} 

Let us calculate degree of each $K_i$. In the formula $\mathrm{deg}(K)=(K^2+|G|)/4$, the inverse of the intersection matrix should be used when squaring $K$. We see that $\mathrm{deg}(K_1)=1$ and $\mathrm{deg}(K_j)=0$ for  $j=2,3,4$. Since the isomorphism given in Equation \ref{modeliso} is given in terms of dual co--vectors, we should take the negative of the degrees when we think of $K_i$'s as elements of the Heegaard-Floer homology. As a result, $HF^+(-Y(G),\mathfrak{t}_1)=\mathcal{T}^+_{(-1)}$ and $HF^+(-Y(G),\mathfrak{t}_i)=\mathcal{T}^+_{(0)}$, for $i=2,3,4$.

\vspace{0.2cm}

Having calculated the Heegaard-Floer homology of the boundary, we now want to see how the Ozsv\'ath-Szab\'o invariant of a contact structure sits in this group. We equip $X(G)$ with the obvious Stein structure $J$: First make the attaching circles of handles corresponding to the vertices  Legendrian unknot with $tb=-1$, see Figure \ref{fig:neylegconfig2}. Since each handle is attached with framing $tb-1$, the unique Stein structure on the $4$--ball extends across these handles, \cite{E}. To identify the contact invariant, we need to determine the Chern class of $J$. The value of $c_1(J)(v_i)$ is given by the rotation number of the corresponding Legendrian unkot. In this case the rotation numbers are all $0$, so $c_1(J)=K_1$. Hence the Ozsv\'ath-Szab\'o invariant of the induced contact structure is the unique generator of $HF^+(-Y(G),\mathfrak{t}_1)$ in in degree $-1$. Note that the invariant is in the image of $U^k$ for every $k$, so we do not get any obstruction to planarity.

\begin{figure}[h]
\begin{center}
	\includegraphics[width=0.40\textwidth]{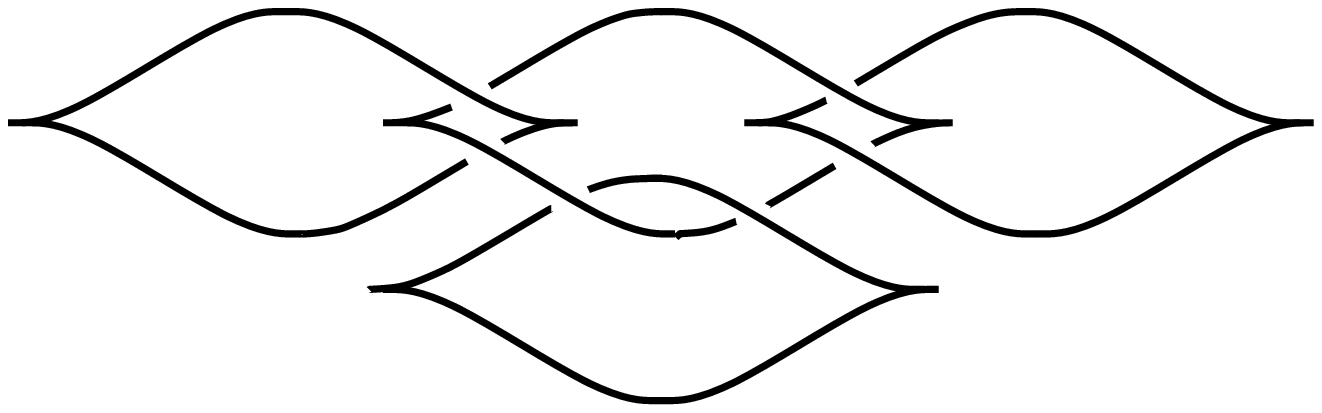}
	\caption{}
	\label{fig:neylegconfig2}
\end{center}
\end{figure}

\end{exam}

\begin{exam}
This example is a follow up of the calculation of the Heegaard Floer homology of the Brieskorn sphere $\Sigma(3,5,7)$ given in \cite{OS1}. This $3$--manifold is given by the plumbing graph $G$ which we indicate in  Figure \ref{fig:config4}. We order the vertices so that the central node comes first, the $-3$--sphere second, then the four vertices in the middle and finally, the six vertices on right. It is shown in \cite{OS1} that only the following characteristic co--vectors admit good full path.

\begin{center}
\begin{tabular}{ccc}
$K_1$&$=$&$(0,-1,0,0,0,0,0,0,0,0,0,0)$\\
$K_2$&$=$&$(0,\;1,0,0,0,0,0,0,0,0,0,0)$\\
$K_3$&$=$&$(0,1,0,0,0,0,0,0,0,0,0,-2)$\\
$K_4$&$=$&$(0,1,0,0,0,-2,0,0,0,0,0,0)$.
\end{tabular} 
\end{center}

\noindent We have $\mathrm{deg}(K_1)=\mathrm{deg}(K_2)=0$, and $\mathrm{deg}(K_3)=\mathrm{K_4}=2$. So the correction term for the unique $\mathrm{spin}^c$ structure is $-2$. Next we consider the Stein structures on $X(G)$. We make the each unknot Legendrian as before, but this time we should do a stabilization for the $-3$ framed unknot to get the framing $tb-1$. Depending on how we do the stabilization, we obtain two Stein structures $J_1$, $J_2$ whose Chern classes are given by $K_1$ and $K_2$. Let $\xi_1$ and $\xi_2$ respectively denote the induced contact structures on the boundary. Since $-\mathrm{deg}(K_i)$ is not minimal, neither contact structure is compatible with a planar open book by Theorem \ref{rest}. This also can be seen by using simple criteria found by Ozsv\'ath,Stipsicz and Szab\'o, see Theorem \ref{oss2} and \ref{oss3}.  

Finally, we would like to show why $c^+(\xi_i)$ is not in the image of $U$, for $i=1,2$. By Theorem \ref{repr} the contact invariant $c^+(\xi_i)$ is represented by the dual $K_i^*$. It was shown in \cite{OS1} that the minimal relations are given as follows
\begin{eqnarray*}
U\otimes K_3 \sim U\otimes K_4\\
U^2\otimes K_3 \sim U\otimes K_1 \sim U\otimes K_2
\end{eqnarray*}

\noindent Therefore $(U^2\otimes K_3)^*$ is the unique generator of degree $2$ and $U(U^2\otimes K_3)^*=(U\otimes K_3)^*+ K_1^* +K_2^*$, by Equation \ref{Uaction}. So neither $K_1^*$ nor $K_2^*$ is in the image of $U$.

\begin{figure}[h]
\begin{center}
	\includegraphics[width=0.30\textwidth]{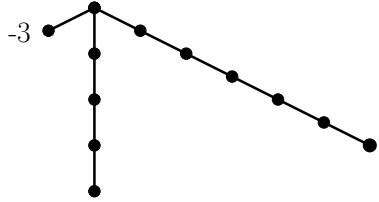}
	\caption{Plumbing graph for $\Sigma(3,5,7)$. Unlabeled vertices have weight $-2$}
	\label{fig:config4}
\end{center}
\end{figure}

\end{exam}
\section{Planar Obstruction}\label{exampleo}   

\vspace{0.2cm}

In this section, we shall illustrate an application Theorem \ref{rest} and show that certain Stein fillable contact structures do not admit planar open books. Obstructions to being supported by planar open books were known to exist before. Some of these obstructions can be checked by using simple criteria. The importance of our examples is that no other simple criterion is sufficient to prove their non-planarity. Before discussing our examples we shall give a brief exposition on what is known about obstruction to planarity.

\vspace{0.2cm}

The first known obstruction to planarity  was found by Etnyre. It puts some restrictions on intersection forms of symplectic fillings of planar open books.

\vspace{0.2cm}

\begin{theo}\label{etn1}(Theorem 4.1 in \cite{E1})
If $X$ is a symplectic filling of a contact $3$-manifold $(Y, \xi)$
which is compatible with a planar open book decomposition then $b_+ ^2 (X) = b_0 ^2
(X) = 0$, the
boundary of $X$ is connected and the intersection
form $Q_X$ embeds into a diagonalizable matrix over integers.
\end{theo}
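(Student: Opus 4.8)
The strategy is to cap off the filling $X$ along its contact boundary to obtain a closed symplectic $4$-manifold whose diffeomorphism type is rigidly constrained, and then to extract the four conclusions by linear algebra on the intersection form. Fix a planar open book $(\Sigma,\phi)$ supporting $\xi$, so that $\Sigma$ is a sphere with $n+1$ holes and in particular embeds in $S^2$. The first — and crucial — step is to construct a \emph{symplectic cap}: a compact symplectic $4$-manifold $(C,\omega_C)$ with concave contact boundary $(-Y,\xi)$, built from the open book data, such that after a rescaling of the symplectic forms in a collar of $Y$ the glued manifold $Z:=X\cup_Y C$ is a closed symplectic $4$-manifold. Planarity enters precisely here: capping off the binding of the open book turns each page $\Sigma$ into an embedded $2$-sphere, and one arranges that $C$ carries an embedded symplectic sphere $S$ (a capped-off page) with $S\cdot S\geq 0$, together with whatever exceptional spheres are needed below, all lying inside $C=Z\setminus X$; in good cases $Z$ can even be given the structure of a genus-zero Lefschetz fibration over $S^2$ with generic fiber $S$.

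The second step is to invoke McDuff's classification of closed symplectic $4$-manifolds containing an embedded symplectic sphere of non-negative self-intersection (equivalently, here, the classification of genus-zero Lefschetz fibrations over $S^2$): such a $Z$ is rational or ruled, and since the distinguished sphere $S$ is a capped page the base of the ruling is $S^2$. Hence $Z$ is diffeomorphic to one of $k\,\overline{\mathbb{CP}}^2$, $\mathbb{CP}^2\#k\,\overline{\mathbb{CP}}^2$, or $(S^2\times S^2)\#k\,\overline{\mathbb{CP}}^2$; using the diffeomorphism $(S^2\times S^2)\#\overline{\mathbb{CP}}^2\cong\mathbb{CP}^2\#2\,\overline{\mathbb{CP}}^2$ one may assume $Z$ is $k\,\overline{\mathbb{CP}}^2$ or $\mathbb{CP}^2\#k\,\overline{\mathbb{CP}}^2$. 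In either case $b_2^+(Z)\leq 1$, the form $Q_Z$ is diagonal over $\mathbb{Z}$, and when $b_2^+(Z)=1$ a generator of its positive part is represented by a symplectic sphere contained in the cap $C$.

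The third step is homological bookkeeping for the inclusion $X\hookrightarrow Z$. Since $C$ has connected boundary $-Y$ and $Z$ is closed, any boundary component of $X$ other than $Y$ would persist as a boundary component of $Z$; hence $\partial X=Y$ is connected. A Mayer--Vietoris computation for $Z=X\cup_Y C$ together with Poincar\'e--Lefschetz duality shows, over $\mathbb{Q}$, that the image of $H_2(X)$ in $H_2(Z)$ is a subspace on which $Q_Z$ restricts to $Q_X$ and that $Q_X$ is nondegenerate, i.e. $b^2_0(X)=0$. Because the entire positive-definite part of $Q_Z$ is carried by a symplectic sphere lying in $C$, and such a sphere pairs trivially with every class supported in $X$, the form $Q_X$ is negative definite; in particular $b_2^+(X)=0$, and $Q_X$ embeds as a sublattice of the negative-definite diagonal lattice $\langle-1\rangle^{\oplus N}$ for some $N$, which is the asserted diagonalizable integer matrix.

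The hard part is the first step: manufacturing the symplectic cap and the non-negatively self-intersecting symplectic sphere out of the planar open book. This is the only place planarity of the page is used and it is where the genuine symplectic geometry lives — one must handle the non-exact behavior of the symplectic form near $Y$, check that the capped-off page is an honestly symplectic sphere with the correct self-intersection, and identify the resulting global structure on $Z$. Once the cap exists the remainder is essentially formal: McDuff's theorem (or the classification of genus-zero Lefschetz fibrations) is used as a black box, and the four stated conclusions follow from elementary facts about the intersection form of a codimension-zero submanifold of a blow-up of $\mathbb{CP}^2$.
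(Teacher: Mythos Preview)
The paper does not prove this statement at all: Theorem~\ref{etn1} is merely quoted from Etnyre's paper \cite{E1} as background for the discussion of planar obstructions, with no argument supplied. Your sketch is therefore not comparable to anything in the present paper. That said, what you outline is essentially the strategy of Etnyre's original proof in \cite{E1}: cap off the planar open book to obtain a closed symplectic $4$-manifold containing a symplectic sphere of non-negative self-intersection, invoke McDuff's classification to identify the result as a blow-up of $\mathbb{CP}^2$ or $S^2\times S^2$, and then read off the restrictions on $X$ from the embedding of its intersection form. So your proposal is a faithful high-level summary of the proof in the cited source, but there is no ``paper's own proof'' here to compare against.
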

\noindent Ozsv\'ath Szab\'o  and Stipsicz found another obstruction in \cite{OSS}. Their obstruction is a consequence of Theorem \ref{goss} above though its statement has no reference to Floer homology.
\begin{theo}\label{oss2}(Corollary 1.5 of \cite{OSS})
Suppose that the contact $3$-manifold $(Y, \xi)$ with $c_1(s(\xi)) = 0$ admits a
Stein filling ($X, J)$ such that $c_1(X, J) \neq 0$. Then $\xi$ is not supported by a planar open
book decomposition.
\end{theo}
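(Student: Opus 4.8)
The plan is to reduce the statement to Theorem \ref{goss} together with Proposition \ref{repr}, thereby removing any reference to open books and working entirely inside the combinatorial model from Section \ref{algorithm}. First I would observe that the hypothesis $c_1(\mathfrak{s}(\xi)) = 0$ means the induced $\mathrm{spin}^c$ structure $\mathfrak{t}$ on $Y$ is the one for which $HF^+(-Y,\mathfrak{t})$ contains the torsion tower, so that it makes sense to ask whether $c^+(\xi)$ is $U$-divisible. By Theorem \ref{goss}, if $\xi$ were supported by a planar open book, then $c^+(\xi) \in U^d \cdot HF^+(-Y)$ for every $d \in \mathbb{N}$. So it suffices to produce a single $d$ for which $c^+(\xi) \notin U^d \cdot HF^+(-Y)$, i.e. to show $c^+(\xi)$ is not infinitely $U$-divisible.

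Next I would use Proposition \ref{repr}: under the identification in Equation \ref{idenfloe}, the class $c^+(\xi)$ is represented by the dual of $K := c_1(X,J)$. Since by hypothesis $K \neq 0$ while $K|_Y = c_1(\mathfrak{s}(\xi)) = 0$, the characteristic vector $K$ restricts to the \emph{same} $\mathrm{spin}^c$ structure on the boundary as the zero vector (or whichever canonical representative lies over $\mathfrak{t}$), but is a genuinely different element of $\mathrm{Char}(G)$ — in particular $K$ is not the correction-term vector of minimal square in general. More precisely, the strategy mirrors the proof of Theorem \ref{rest}: run the $U$-divisibility argument through Equation \ref{modeliso} and Equation \ref{Uaction}. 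Because $HF^+(-Y,\mathfrak{t}) = \mathcal{T}^+ \oplus HF_{\mathrm{red}}$, in all sufficiently large even degrees there is a unique generator; pushing $K^*$ up the tower by $U^{-N}$ for large $N$ and then applying $U^N$ expresses $U^N$ applied to that unique top generator as a sum $K^* + (\text{other terms})$ in which $K^*$ appears. The fact that $K \neq 0$ forces $K^*$ to be distinct from the class corresponding to the minimal-square vector, so this sum is nontrivial and $K^* \notin \mathrm{Im}(U^N)$, hence $c^+(\xi) \notin U^N \cdot HF^+(-Y)$.

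The main obstacle — and the step requiring care rather than a slogan — is the bookkeeping in the last paragraph: one must check that when $K \neq 0$, the generator $U^{n_0}\otimes K$ really does sit at the unique top of the tower for $n_0$ large and that $K^*$ genuinely appears with nonzero coefficient in $U^{n_0}(U^{n_0}\otimes K)^*$, i.e. that $U^{n_0}\otimes K$ is equivalent in $\mathbb{K}^+(G)$ to $U^{n_0+1}\otimes(\text{the minimal vector})$ while $K$ itself is not equivalent (at $U$-depth zero) to that minimal vector. This is exactly where $K \neq 0$ enters: if $K$ were the minimal-square vector it would \emph{be} the bottom of the tower and infinitely $U$-divisible, which is the planar case. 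A clean way to package this is to note that $X(G)$ with its Stein structure $J$ and with the $\mathrm{spin}^c$ structure of minimal square both induce maps to $HF^+(S^3)$; Theorem \ref{Plame} (or the argument in Theorem \ref{rest}) shows $K^*$ is a nonzero generator of $\mathrm{Ker}(U)$ distinct from the minimal one, and then the identity $U^{n_0}(U^{n_0}\otimes K)^* = K^* + \sum_i (U^{n_i - n_0}\otimes K_i)^*$ of the proof of Theorem \ref{rest} gives the conclusion verbatim. Alternatively, one can simply cite that Theorem \ref{oss2} is the contrapositive of Theorem \ref{goss} combined with the non-vanishing-and-non-divisibility of $c^+(\xi)$ established by Proposition \ref{repr}, which is how \cite{OSS} phrase it; but the route through Section \ref{algorithm} has the advantage of being entirely combinatorial.
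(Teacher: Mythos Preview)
There is a genuine gap in your approach, and it stems from a scope mismatch. Theorem \ref{oss2} is stated for an \emph{arbitrary} contact $3$-manifold $(Y,\xi)$ with an \emph{arbitrary} Stein filling $(X,J)$; nowhere is it assumed that $X$ is a plumbing $X(G)$ along a negative definite tree with at most one bad vertex. Yet your argument invokes Proposition \ref{repr}, Equation \ref{idenfloe}, Equation \ref{modeliso}, and the $U$-action formula \ref{Uaction}, all of which are only available when $Y=Y(G)$ and $X=X(G)$ for such a $G$. So the reduction to the combinatorial model of Section \ref{algorithm} simply does not apply under the stated hypotheses, and the proof as written establishes at best a special case. (Even in that special case the bookkeeping you flag needs more care: the claim that ``if $K$ were the minimal-square vector it would be the bottom of the tower and infinitely $U$-divisible'' ignores the possibility that $\mathrm{rank}\,HF^+_d(-Y(G),\mathfrak t)>1$, which is precisely the second alternative in Theorem \ref{rest}.)

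Note also that the present paper does not give its own proof of Theorem \ref{oss2}; the statement is quoted from \cite{OSS} purely as background, alongside Theorems \ref{etn1} and \ref{oss3}, to situate Theorem \ref{rest} among the known planarity obstructions. The original argument in \cite{OSS} proceeds via Theorem \ref{goss} together with a direct degree comparison for $c^+(\xi)$ in $HF^+(-Y,\mathfrak t)$, and makes no use of the plumbing algorithm. If you want to recover Theorem \ref{oss2} in full generality you must argue at that level rather than through Proposition \ref{repr}.
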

\noindent Yet another criterion is stated in \cite{OSS}. It partially implies Theorem \ref{rest} above.
\begin{theo}\label{oss3}(Corollary 1.7 of \cite{OSS})
Suppose that $Y$ is a rational homology $3$-sphere. The number of homotopy
classes of $2$-plane fields which admit contact structures which are both symplectically
fillable and compatible with planar open book decompositions is bounded above
by the number of elements in $H_1(Y ; \mathbb{Z})$. More precisely, each $\mathrm{spin}^c$ structure $\mathfrak{s}$ is represented
by at most one such $2-$plane field, and moreover, the Hopf invariant of the
corresponding $2-$plane field must coincide with the "correction term" $d(-Y, s)$.
\end{theo}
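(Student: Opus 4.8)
The plan is to deduce this statement from Theorem \ref{goss} together with the structure of $HF^+$ for a rational homology sphere and the homotopy-theoretic classification of $2$-plane fields. Recall that on a rational homology $3$-sphere $Y$ a co-oriented $2$-plane field is determined up to homotopy by the pair consisting of its induced $\mathrm{spin}^c$ structure and its $d_3$ (Hopf) invariant: over the $2$-skeleton a $2$-plane field is the same data as a $\mathrm{spin}^c$ structure, the extensions over the single top cell are classified by $\pi_3(S^2)\cong\mathbb{Z}$, and for torsion $\mathrm{spin}^c$ structures (all of them, since $H^2(Y;\mathbb{Z})$ is torsion) this $\mathbb{Z}$ is detected by $d_3$. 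Consequently it suffices to prove: if $\xi$ is a symplectically fillable contact structure on $Y$ supported by a planar open book and $\mathfrak{s}=\mathfrak{s}(\xi)$, then $d_3(\xi)=-d(-Y,\mathfrak{s})-\tfrac12$. This pins down the homotopy class of $\xi$ inside each $\mathrm{spin}^c$ stratum, so at most one homotopy class of $2$-plane field arises per $\mathrm{spin}^c$ structure, giving the bound $|H_1(Y;\mathbb{Z})|$ and the asserted value of the Hopf invariant.

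First I would establish the non-vanishing $c^+(\xi)\neq 0$. A symplectic (weak) filling of a rational homology sphere can be perturbed near the boundary to a strong filling by \cite{OO}. Puncturing the resulting strong filling $X$ and regarding it as a cobordism from $Y$ to $S^3$, Ghiggini's extension of Plamenevskaya's theorem (the remark following Theorem \ref{Plame}) shows that $F^+_{X,\mathfrak{s}_1}(c^+(\xi))$ is a generator of $HF^+_0(S^3)$, where $\mathfrak{s}_1$ is the $\mathrm{spin}^c$ structure induced by the almost complex structure; in particular $c^+(\xi)\neq 0$.

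Next comes the $U$-divisibility argument, which is the heart of the matter but is purely algebraic once Theorem \ref{goss} is granted. By that theorem $c^+(\xi)\in U^d\cdot HF^+(-Y,\mathfrak{s})$ for every $d\in\mathbb{N}$, so $c^+(\xi)\in\bigcap_{d\geq 0}U^d\cdot HF^+(-Y,\mathfrak{s})$. Since $Y$ is a rational homology sphere we have the splitting $HF^+(-Y,\mathfrak{s})=\mathcal{T}^+\oplus HF_{\mathrm{red}}$ of $\mathbb{F}[U]$-modules, where $HF_{\mathrm{red}}$ is finitely generated over $\mathbb{F}$ and $U$ acts nilpotently on it, while $U$ acts surjectively on the tower $\mathcal{T}^+$ whose bottom element lies in degree $d(-Y,\mathfrak{s})$. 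Hence $U^{N}\cdot HF^+(-Y,\mathfrak{s})=\mathcal{T}^+$ for $N$ large, and therefore $\bigcap_{d\geq 0}U^d\cdot HF^+(-Y,\mathfrak{s})=\mathcal{T}^+$. Thus $c^+(\xi)\in\mathcal{T}^+$; combining this with $c^+(\xi)\in\ker U$ (property (4)), $c^+(\xi)\neq 0$, and the fact that $\ker U\cap\mathcal{T}^+$ is one-dimensional and concentrated in degree $d(-Y,\mathfrak{s})$, we conclude that $c^+(\xi)$ is exactly the bottom generator of $\mathcal{T}^+$, so $\deg c^+(\xi)=d(-Y,\mathfrak{s})$. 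By property (5) the degree of $c^+(\xi)$ equals $-d_3(\xi)-\tfrac12$; comparing the two identities gives $d_3(\xi)=-d(-Y,\mathfrak{s})-\tfrac12$, which is the required equality and is precisely the assertion that the Hopf invariant of $\xi$ coincides with the correction term $d(-Y,\mathfrak{s})$ in the normalization of \cite{OSS}.

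The only genuinely delicate point is the non-vanishing step for merely symplectically (rather than Stein) fillable contact structures: property (3) in Section \ref{homology} records $c^+\neq 0$ only for Stein fillings, so one must invoke \cite{OO} to upgrade a weak filling of a rational homology sphere to a strong one and then Ghiggini's version of Theorem \ref{Plame}. Everything else is formal: the classification of $2$-plane fields on rational homology spheres by the pair $(\mathfrak{s},d_3)$, the collapse of $\bigcap_d U^d\cdot HF^+$ onto $\mathcal{T}^+$ (which uses only that $HF_{\mathrm{red}}$ is finitely generated), and the degree bookkeeping supplied by properties (4) and (5) of the contact invariant.
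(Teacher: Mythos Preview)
The paper does not give a proof of this statement: Theorem~\ref{oss3} is simply quoted as Corollary~1.7 of \cite{OSS}, with no argument supplied here. So there is no ``paper's own proof'' to compare against.

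That said, your argument is correct and is essentially the proof one finds in \cite{OSS}. The logical structure is exactly as expected: use the Gompf-style classification of $2$-plane fields on a rational homology sphere by the pair $(\mathfrak{s},d_3)$; invoke Theorem~\ref{goss} to force $c^+(\xi)$ into $\bigcap_d U^d\cdot HF^+(-Y,\mathfrak{s})$; use the decomposition $HF^+=\mathcal{T}^+\oplus HF_{\mathrm{red}}$ and the nilpotence of $U$ on the reduced part to identify that intersection with $\mathcal{T}^+$; then combine $c^+(\xi)\in\ker U$, $c^+(\xi)\neq 0$, and the one-dimensionality of $\ker U\cap\mathcal{T}^+$ to pin down the degree. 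Your handling of the non-vanishing for symplectically (rather than Stein) fillable $\xi$ via \cite{OO} and Ghiggini's extension of Theorem~\ref{Plame} is the right way to close that gap, and you are right to flag it as the only step needing care beyond the Stein case recorded in property~(3). The caveat about the normalization of the Hopf invariant versus $d_3$ is also appropriate; the discrepancy between $d_3(\xi)=-d(-Y,\mathfrak{s})-\tfrac12$ and the literal phrase ``coincide with the correction term'' in the quoted statement is purely a convention issue in \cite{OSS}.
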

Below, we give examples of  non-planar Stein fillable contact structures on a Seifert fibered space. Non-planarity of some of our examples do not follow from Theorem \ref{etn1}, Theorem \ref{oss2} or  Theorem \ref{oss3}. 
\begin{exam}\label{exam:star}
Consider the star shaped plumbing graph consisting of eight vertices where the central vertex has weight $-4$, a neighboring vertex has weight $-3$ and all the others are of weight $-2$ (See figure \ref{fig:config1}). The boundary $3$-manifold $Y$ is the Seifert fibered space $\displaystyle M(-4,\underbrace{\frac{1}{2},\cdots,\frac{1}{2}}_6,\frac{1}{3})$. The reason why we have so many self intersection  $-2$ spheres is that we want to avoid $L$-spaces where Theorem \ref{goss} does not provide an obstruction to admitting a planar open book. For the topological characterization of $L$-spaces among Seifert fibered spaces see \cite{LS}. It can be shown that the corresponding intersection form is negative definite,and  has determinant $128$. Moreover it can be embedded into a symmetric matrix which is diagonalizable over integers.  To see this, index the vertices so that the central one comes first and the weight $-3$ vertex is the last. Let $e_1,e_2,\cdots,e_{11}$ be a basis for $\mathbb{R}^{11}$ such that $e_i\cdot e_i=-1$ for all $i=1,2,\cdots,11$. The embedding is defined by the following set of equations

\begin{eqnarray}
\nonumber v_1&\to& -e_1-e_2-e_3-e_4\\
\nonumber v_2&\to& e_2-e_7\\
\nonumber v_3&\to& e_2+e_7\\
\nonumber v_4&\to& e_3-e_8\\
\nonumber v_5&\to& e_3+e_8\\
\nonumber v_6&\to& e_4-e_9\\
\nonumber v_7&\to& e_4+e_9\\
\nonumber v_8&\to& e_1+e_{10}+e_{11}
\end{eqnarray}

\vspace{0.3cm}

\begin{figure}[h]
\begin{center}
	\includegraphics[width=0.20\textwidth]{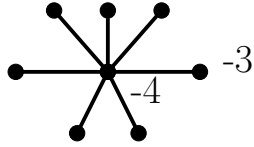}
	\caption{Plumbing description of Y. All unmarked vertices have weight -2.}
	\label{fig:config1}	
\end{center}
\end{figure}

First, we calculate $HF^+(-Y,\mathfrak{t})$ for every $\mathrm{spin}^c$ structure $\mathfrak{t}$. For similar calculations, see \cite{D}, \cite{R1}, \cite{R2}, and Section 3.2 of \cite{OS1}. As before, we  write any characteristic vector $K$ in the form $K=[ \langle K,v_1 \rangle,\cdots,\langle K,v_8 \rangle]$. There are $768$ characteristic vectors satisfying equality \ref{ini}, and $138$ of them have good full paths. When we distribute these to $\mathrm{spin}^c$ structures of $Y$, we see that for $10$ $\mathrm{spin}^c$ structures $\mathrm{Ker}(U)$ has rank $2$, and the rank is $1$ for the rest. Table \ref{fig:table1} shows $HF^+$ for these $10$ $\mathrm{spin}^c$ structures. 

\vspace{0.3cm}

\begin{rem}\label{ambig}
As pointed out in \cite{OS1}, the set of $\mathrm{spin}^c$ structures on $Y$ can be identified with $2H^2(X(G),\partial X(G))$ orbits in $\mathrm{Char}(G)$. Therefore two characteristic vectors $K_1$, $K_2$, induce the same $\mathrm{spin}^c$ structure on the boundary, if and only if all the entries of $(1/2) I(G)^{-1}(K_1-K_2)$ are integer where $I(G)$ is the intersection matrix.
\end{rem}

\vspace{0.3cm}

Next, we consider the obvious Stein structures that arise from the handlebody diagram associated to $G$. Following Eliashberg, we isotope the attaching circles of $2$-handles  into Legendrian position so that their framing become one less than the Thurston-Bennequin framing. For $-2$ framed unknots, there is unique way to do that. For the other unknots which correspond to $v_1$ and $v_8$ take Legendrian isotopes with rotation numbers $i$ and $j$ respectively where $i=-2,0,2$, and $j=-1,1$. Call the resulting Stein structure as $J_{i,j}$ and the induced contact structure by $\xi_{i,j}$, see figure \ref{fig:legconfig} for a picture of $J_{2,-1}$. Note that the first Chern class of $J_{i,j}$ is given by the characteristic vector $K_{i,j}=[i,0,0,0,0,0,0,j]$. It is easy to verify that $\displaystyle d_3(\xi_{i,j})+1/2=(K_{i,j}^2+|G|)/4=\mathrm{degree}(K_{i,j})$. According to Theorem \ref{rest},  the contact structures $\xi_{\pm 2, \pm 1}$ do not admit planar open books. By the algorithm given in \cite{EO2} these contact structures do admit genus one open books, so their support genera are all one. One can not use Theorem \ref{oss2}  directly to get this conclusion because the Chern classes of the corresponding $\mathrm{spin}^c$ structures are all of order $4$. Though Theorem \ref{oss3} also implies our conclusion for $\xi_{2,1}$ and $\xi_{-2,-1}$, it doesn't apply to $\xi_{2,-1}$ or $\xi_{-2,1}$. So the latter two are the contact structures we promised at the beginning of the example.

\begin{figure}[h]
\begin{center}
	\includegraphics[width=.40\textwidth]{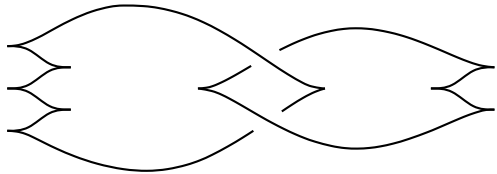}
	\caption{Legendrian handlebody diagram giving $J_{2,-1}$. The curve on left corresponds $v_1$ and the other represents $v_8$. They are both oriented counter clockwise. We omit the other unknots linking to $v_1$ in order not to complicate the picture.}
	\label{fig:legconfig}
	\end{center}
\end{figure}
 
\end{exam}

\begin{rem}
The main result of \cite{EO} implies that the support genera of plumbings with at most two bad vertices are at most one. On the other hand the algorithm of Ozsv\'ath and Szab\'o  does not work if the number of bad vertices is greater than two. Therefore, the techniques used in this paper do not seem to be sufficient to find an example of a contact structure with support genus strictly greater than one. We are planning to turn this problem in a future project using a different approach.
\end{rem}

\begin{table}[p]
\begin{center}

\begin{tabular}{|c|c|c|c|c|}
\hline
$\mathrm{Spin}^c$&Characteristic Vectors & Degree & Relation &  $ HF^+(-Y)$\\
\hline
\multirow{2}{*}{1}&$[2,0,0,0,0,0,0,-1]$&$7/8$&\multirow{2}{*}{$U\otimes K_1=U\otimes K_2$}&\multirow{2}{*}{$\mathcal{T}_{-\frac{7}{8}}\oplus\mathbb{F}_{-\frac{7}{8}}$}\\
&$[ -2, 0, 0, 0, 0, 0, 0, 3 ]$&$7/8$&&\\
\hline

\multirow{2}{*}{2}&$[-2,0,0,0,0,0,0,1]$&$7/8$&\multirow{2}{*}{$U\otimes K_1=U\otimes K_2$}&\multirow{2}{*}{$\mathcal{T}_{-\frac{7}{8}}\oplus\mathbb{F}_{-\frac{7}{8}}$}\\
&$[ 0, 0, 0, 0, 0, 0, 0, 3 ]$&$7/8$&&\\
\hline
\multirow{2}{*}{3}&$[-2,0,0,0,0,0,0,-1]$&$-1/8$&\multirow{2}{*}{$U\otimes K_1=U^2\otimes K_2$}&\multirow{2}{*}{$\mathcal{T}_{-\frac{15}{8}}\oplus\mathbb{F}_{\frac{1}{8}}$}\\
&$[ 0, 0, 0, 0, 0, 0, 0, 1 ]$&$15/8$&&\\
\hline
\multirow{2}{*}{4}&$[2,0,0,0,0,0,0,1]$&$-1/8$&\multirow{2}{*}{$U\otimes K_1=U^2\otimes K_2$}&\multirow{2}{*}{$\mathcal{T}_{-\frac{15}{8}}\oplus\mathbb{F}_{\frac{1}{8}}$}\\
&$[ 0, 0, 0, 0, 0, 0, 0, -1 ]$&$15/8$&&\\
\hline
\multirow{3}{*}{$5+j$}&$[ -2, 0, \cdots, \underbrace{2}_{j+2},\cdots, 0, -1 ]$&3/4&\multirow{3}{*}{$U\otimes K_1=U\otimes K_2$}&\multirow{3}{*}{$\mathcal{T}_{-\frac{3}{4}}\oplus\mathbb{F}_{-\frac{3}{4}}$}\\
&$[ 0, 0, \cdots, \underbrace{2}_{j+2},\cdots, 0, 1 ]$&3/4&&\\
&$j=0,\cdots,5$&&&\\
\hline
\end{tabular}
\end{center}
\vspace{0.3cm}

\begin{center}
\begin{tabular}{|c|c|}
\hline
$\mathrm{Spin}^c$&Root Vectors\\
\hline
\multirow{2}{*}{1}&$[ 2, 0, \cdots, 0, \underbrace{-4}_i, 0,\cdots, 0, -3 ]$\\
&$i=2,\cdots,7$\\
\hline
\multirow{3}{*}{2}&$[ 0, 0, 0, 0, 0, 0, 0, -5 ],$\\
&$[ 0, 0, \cdots, 0, \underbrace{-4}_i, 0,\cdots, 0, 1 ],$\\
&$i=2,\cdots,7$\\
\hline
\multirow{2}{*}{3}&$[ 0, 0, \cdots, 0, \underbrace{-4}_i, 0,\cdots, 0,- 1 ],$\\
&$i=2,\cdots,7$\\
\hline
$4$&$[ -2, 0, 0, 0, 0, 0, 0, -3 ]$\\
\hline
\multirow{3}{*}{$5+j$}&$[ 2,0, \cdots, \underbrace{-4}_i,\cdots,\underbrace{-2}_{j+2},\cdots, 0, -1 ]$\\
&$i=2,\cdots,7$\\
&$j=0,\cdots,5$\\
\hline

\end{tabular}
\end{center}

\caption{$HF^+$ of $\displaystyle M(-4,\frac{1}{2},\cdots,\frac{1}{2},\frac{1}{3})$ for $10$ $\mathrm{spin}^c$ structures.}
\label{fig:table1}
\end{table}

\vspace{0.2cm}

\section{Calculation of $\sigma$}\label{sec:sigma}

\vspace{0.2cm}

In this section we shall prove Theorem \ref{nesigma} by  calculating explicitly the $\sigma$ invariant of a family of contact $3$-manifolds. Our argument is based on a previous work of Rustamov, \cite{R1}. 

\vspace{0.3cm}

For every positive integer $n$, consider the contact manifold $(Y_n,\xi_n)$ obtained from $(S^3,\xi_{\mathrm{std}})$ by doing Legendrian surgery on the $(2,2n+1)$ torus knot $L_n$ stabilized $2n-1$ times as in  figure \ref{fig:legsymplectic2}. Observe that the Thurston-Bennequin invariant of $L_n$ is zero so the topological surgery coefficient is negative one. In fact, the $3$-manifold $Y_n$ is the Brieskorn homology sphere $\Sigma(2,2n+1,4n+3)$.

\vspace{0.3cm}

\begin{figure}[h]
\begin{center}
	\includegraphics[width=0.40\textwidth]{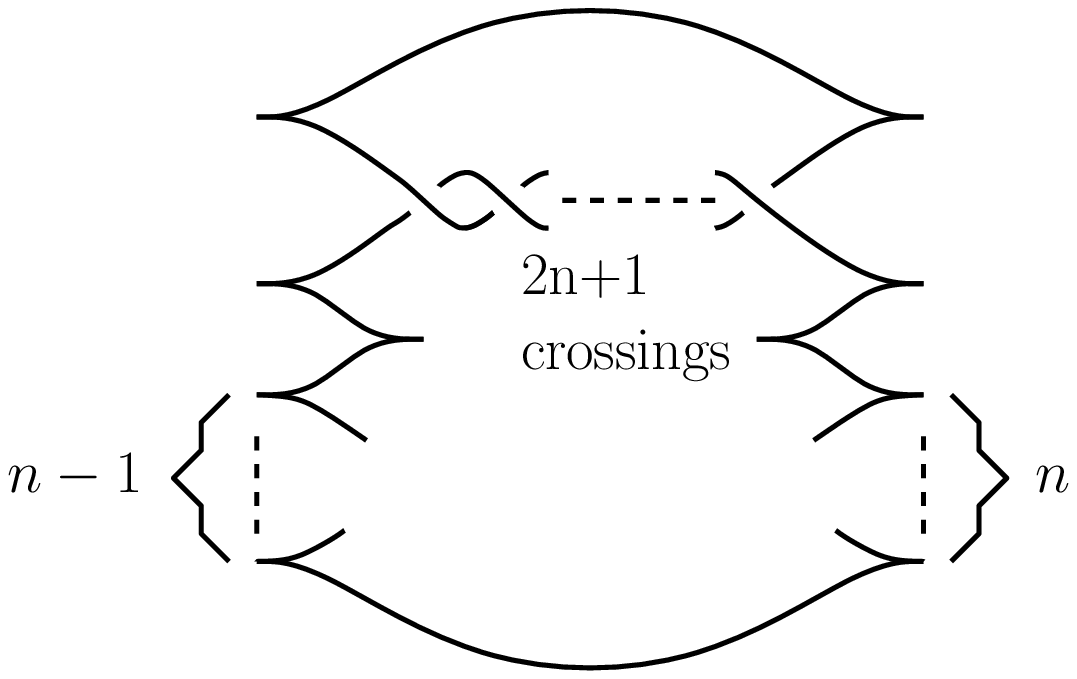}
	\caption{$K_n$}
	\label{fig:legsymplectic2}
	\end{center}
\end{figure}

\begin{theo}\label{sigma}
 $\sigma(Y_n,\xi_n)=-(p_n-1)$ where $p_n$ is the $n$th element of the sequence $$1,1,2,2,3,3,\cdots .$$ 

\end{theo}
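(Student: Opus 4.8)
\textbf{Proof proposal for Theorem \ref{sigma}.}

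The plan is to identify $(Y_n,\xi_n)$ with the boundary of a negative-definite plumbing with at most one bad vertex, equip that plumbing with the Stein structure coming from the given Legendrian surgery diagram, and then read off $\sigma$ from the combinatorial model for $HF^+(-Y_n)$ via Proposition \ref{repr} and Equation \ref{Uaction}. First I would convert the surgery diagram in Figure \ref{fig:legsymplectic2} into a plumbing tree: since $Y_n = \Sigma(2,2n+1,4n+3)$ is a Brieskorn sphere, it bounds the canonical negative-definite star-shaped plumbing $G_n$ associated to the Seifert invariants, and one checks $G_n$ has exactly one bad vertex (the central one), so the Ozsv\'ath--Szab\'o algorithm applies. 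The $2n-1$ stabilizations of $L_n$ will be recorded as the rotation numbers of the Legendrian unknots, hence pin down the characteristic covector $K = c_1(J_n)$ in $\mathrm{Char}(G_n)$ representing $c^+(\xi_n)$; by the $d_3$-computation (as in Example \ref{exam:star}) the degree of $K^\ast$ equals $-d_3(\xi_n)-1/2 = (K^2+|G_n|)/4$.

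Next, by Proposition \ref{repr}, $\sigma(Y_n,\xi_n) = -\max\{d : K^\ast \in \mathrm{Im}(U^d)\}$ computed inside $\mathbb{K}^+(G_n)$ using the $U$-action of Equation \ref{Uaction}. Concretely I would run the full-path algorithm to list the characteristic initial vectors admitting good full paths in the relevant $\mathrm{spin}^c$ class (here $Y_n$ is a $\mathbb{Z}HS^3$, so there is only one), determine their degrees, and then use the root-vector method from Section \ref{algorithm} together with Rustamov's calculation in \cite{R1} of $HF^+(\Sigma(2,2n+1,4n+3))$ to find the minimal relations $U^{a}\otimes K \sim U^{b}\otimes K_i$ among the generators of $\mathrm{Ker}(U)$. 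The point is that $U^d(U^d\otimes K)^\ast$ expands, by Equation \ref{Uaction}, as $K^\ast$ plus other dual basis vectors precisely when $K^\ast$ is \emph{not} hit by $U^d$ acting on the unique generator of the degree $\deg(K^\ast)+2d$ subspace; tracking the first $d$ at which this fails gives $\sigma$. Rustamov's description of $HF_{\mathrm{red}}^+$ of these Brieskorn spheres should show that the relevant "tower depth" above $c^+(\xi_n)$ is governed by $p_n-1$ with $p_n$ running through $1,1,2,2,3,3,\dots$, yielding $\sigma(Y_n,\xi_n) = -(p_n-1)$.

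The main obstacle I anticipate is the bookkeeping in the middle step: correctly translating the $(2n-1)$-fold stabilization into the precise rotation numbers on the plumbing handles (so that $K = c_1(J_n)$ is the right covector and not a neighboring one), and then extracting the exact value $p_n-1$ — rather than just a bound — for the maximal power of $U$ dividing $c^+(\xi_n)$. This requires either a closed-form understanding of all minimal relations in $\mathbb{K}^+(G_n)$ for every $n$ (a genuinely $n$-dependent family of plumbings), or a clean appeal to the structure of $HF^+(\Sigma(2,2n+1,4n+3))$ from \cite{R1} identifying which graded piece $c^+(\xi_n)$ occupies relative to the bottom of the tower. I would handle the latter by matching $\deg(K^\ast) = (K^2+|G_n|)/4$ against the correction term $d(-Y_n)$ from Equation \ref{correct} and against the degrees of the $HF_{\mathrm{red}}^+$ generators, so that the $U$-divisibility of $K^\ast$ is forced by where it sits in the already-known module structure; the sequence $1,1,2,2,3,3,\dots$ should emerge from the parity/size of $d(-Y_n) - \deg(K^\ast)$.
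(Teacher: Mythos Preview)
Your overall strategy---identify $c^+(\xi_n)$ via Proposition \ref{repr}, feed in Rustamov's computation of $HF^+(-\Sigma(2,2n+1,4n+3))$, and read off the maximal $U$-divisibility---is exactly the paper's strategy. The gap is in the step where you ``equip that plumbing with the Stein structure coming from the given Legendrian surgery diagram'' and claim the $2n-1$ stabilizations ``will be recorded as the rotation numbers of the Legendrian unknots.'' The Legendrian surgery diagram defines a Stein structure on the $4$-manifold $V$ built from a single Weinstein $2$-handle on $B^4$; this is \emph{not} the plumbing $X(G_n)$, and there is no a priori reason the contact structure $\xi_n$ is induced by any Stein structure on $X(G_n)$ at all. (Indeed, the plumbing the paper and Rustamov use has a central vertex of weight $-1$, so it does not even carry an obvious Stein handle decomposition.) You cannot simply transport rotation numbers across a diffeomorphism of boundaries.

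The paper bridges this by blowing up $V$ exactly $n+2$ times and performing explicit handleslides (Figure \ref{fig:seqblowup}) until the resulting $4$-manifold is literally $X(G)$. Blow-ups destroy the Stein condition but preserve a symplectic structure weakly filling $\xi_n$; one then invokes the weak-filling extension of Proposition \ref{repr} (Remark \ref{rem:repr}, via Ghiggini's generalization of Theorem \ref{Plame}). The Chern class is tracked through the blow-ups using $c_1(\mathfrak{s}')(e_i)=1$ on each exceptional sphere together with $c_1(\mathfrak{s})(h)=\mathrm{rot}(L_n)=\pm 1$, and this is what pins down $c_1(\mathfrak{s}')$ as one of Rustamov's bottom-degree generators $K_n$ or $K_{n+1}$. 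Once that identification is made, Rustamov's explicit minimal relations $U^{p_i}\otimes K_i \sim U^{p_i+q_{n-i}}\otimes K_{n+1}$ (and their mirror) give $\sigma=-(p_n-1)$ directly. Note in particular that $\deg K_n^*=0=d(-Y_n)$, so your proposed heuristic that $p_n$ emerges from the ``parity/size of $d(-Y_n)-\deg(K^*)$'' cannot work; the value $p_n-1$ comes from the height of the reduced tower $\mathbb{F}^{p_n}_{(0)}$ sitting at the same grading as the bottom of $\mathcal{T}^+$, which is encoded in Rustamov's relations and not in any degree shift.
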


\vspace{0.2cm}

Clearly, Theorem \ref{sigma} implies Theorem \ref{nesigma}. Another immediate application of Theorem \ref{sigma} is that $(Y_n,\xi_n)$ can not be supported by a planar open book.  This was first pointed out in \cite{OSS}. Finally, combining this theorem with the fact that $\sigma$ invariant respects the partial ordering coming from Stein cobordisms we have the following corollary.

\begin{cor}
There is no Stein cobordism from $(Y_n,\xi_n)$ to $(Y_m,\xi_m)$ if $n>m+1$. In particular, one can not obtain $(Y_m,\xi_m)$ from $(Y_n,\xi_n)$ via Thurston-Bennequin minus one $(tb-1)$ surgery on a Legendrian link.
\end{cor}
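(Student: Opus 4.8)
The plan is to derive both assertions from the monotonicity of $\sigma$ under Stein cobordisms together with the values computed in Theorem \ref{sigma}; the argument is short once these are in place. I would first reduce the second sentence to the first: a Thurston--Bennequin minus one surgery on a Legendrian link is exactly the attachment of Stein $2$--handles along Legendrian curves in the convex boundary (Eliashberg), so its trace is a Stein cobordism between the two contact manifolds, and ruling out the cobordism rules out the surgery.

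Next I would invoke the monotonicity recorded in the introduction. Property (6) of Section \ref{homology} shows that a Stein cobordism induces a $U$--equivariant map on $HF^+$ carrying the contact invariant of one end to that of the other; since a $U$--equivariant map cannot decrease $U$--divisibility, a Stein cobordism between $(Y_n,\xi_n)$ and $(Y_m,\xi_m)$ forces the two $\sigma$--values to obey $\sigma(\mathrm{source})\leq\sigma(\mathrm{target})$, which is precisely the monotonicity stated for $\preceq$.

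The decisive input is then the arithmetic of Theorem \ref{sigma}. Writing $\sigma(Y_k,\xi_k)=-(p_k-1)$ with $p_k=\lceil k/2\rceil$, the value $\sigma(Y_k,\xi_k)$ is weakly monotone in $k$ and strictly changes every two steps, so $\sigma(Y_n,\xi_n)\neq\sigma(Y_m,\xi_m)$ as soon as $\lceil n/2\rceil\neq\lceil m/2\rceil$, i.e. whenever the two indices differ by more than one. Combining this strict separation with the monotonicity inequality contradicts the existence of the claimed cobordism, which establishes the corollary.

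The step I expect to be the main obstacle is bookkeeping rather than mathematics: one must keep track of which of $(Y_n,\xi_n)$, $(Y_m,\xi_m)$ is the concave (incoming) and which is the convex (outgoing) boundary, since the direction of the map in property (6), and hence the sense of the inequality between the two $\sigma$--values, depends on this choice. Aligning this orientation convention with the definition of $\preceq$ and with the direction in which the Legendrian surgery is performed is the only point that requires genuine care.
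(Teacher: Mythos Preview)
Your approach is precisely the one the paper intends: the corollary is presented as an immediate consequence of Theorem~\ref{sigma} together with the monotonicity of $\sigma$ under Stein cobordisms, and that is exactly what you do.

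However, the bookkeeping issue you flag at the end is not merely a nuisance---it is decisive, and when carried out it does \emph{not} yield the corollary as stated. With the conventions of the paper (property~(6) in Section~\ref{homology} and the definition of $\preceq$), a Stein cobordism from $(Y_n,\xi_n)$ to $(Y_m,\xi_m)$ forces $\sigma(Y_n,\xi_n)\le\sigma(Y_m,\xi_m)$. But Theorem~\ref{sigma} gives $\sigma(Y_k,\xi_k)=1-\lceil k/2\rceil$, so for $n>m+1$ one has $\sigma(Y_n,\xi_n)<\sigma(Y_m,\xi_m)$, which is \emph{compatible} with the monotonicity inequality, not in contradiction with it. Your sentence ``combining this strict separation with the monotonicity inequality contradicts the existence of the claimed cobordism'' is therefore unjustified: knowing only that $\sigma(Y_n,\xi_n)\neq\sigma(Y_m,\xi_m)$ is not enough---you need the inequality to point the wrong way, and here it does not.

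What the argument \emph{does} establish is that there is no Stein cobordism from $(Y_m,\xi_m)$ to $(Y_n,\xi_n)$ when $n>m+1$ (equivalently, one cannot obtain $(Y_n,\xi_n)$ from $(Y_m,\xi_m)$ by $tb-1$ surgery). This is presumably the intended statement, and the printed corollary appears to have source and target interchanged. Your instinct that the orientation convention is the crux was exactly right; you just needed to carry the check through rather than defer it.
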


The above corollary should be compared to a classical result of Ding and Geiges in \cite{DG} where it was proved that any two contact manifolds can be obtained from each other via a sequence of $(tb-1)$ or $(tb+1)$ contact surgeries. In fact, one can always choose such a sequence which contains at most one $tb+1$ surgery. Therefore, the corollary tells us that the existence of $(tb+1)$ surgery is essential even though the contact manifolds in both ends are Stein fillable.

%\vspace{0.5cm}
\newpage 

\textbf{Proof of Theorem \ref{sigma}.}
Let $V$ be the $4$-manifold obtained by attaching a Weinstein $2$-handle to a $4$-ball along $L_n$. Eliashberg's theorem \cite{E} tells that $V$ admits a Stein structure.  Let $\mathfrak{s}$ be the canonical $\mathrm{spin}^c$ structure on $V$, and  denote the homology class determined by the $2$-handle (for some orientation of $L_n$) by $h$. The way we stabilize $L_n$ ensures that.

\begin{equation}\label{eq:chern1}
c_1(\mathfrak{s})(h)=\mathrm{rot}(L_n)=\pm 1
\end{equation}

\vspace{0.2cm}

Where $\mathrm{rot}(L_n)$ stands for the rotation number of $L_n$. Note that the sign of the rotation number depends on how we orient $L_n$. Next, $V$ is  blown-up $n+2$ times, and we do the handleslides indicated in figure \ref{fig:seqblowup}. We see that the resulting $4$-manifold is given by the plumbing graph $G$ in figure \ref{fig:config3}. The manifold $X(G)$ is no longer Stein but it does admit a symplectic structure.  Let $\mathfrak{s}'$ be the canonical $\mathrm{spin}^c$ structure on this symplectic manifold. Let $e_i$ denote the homology class of the $i$th exceptional sphere. We have 

\begin{equation}\label{eq:chern2}
c_1(\mathfrak{s}')(e_i)=1 \;\;\; i=1,2,\cdots,n+2.
\end{equation}

\vspace{0.2cm}

\begin{figure}[h]
\begin{center}
	\includegraphics[width=.35\textwidth]{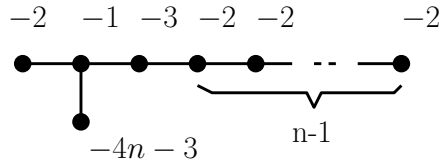}
	\caption{A Plumbing graph for Brieskorn homology sphere $\Sigma (2,2n+1,4n+3)$}
	\label{fig:config3}	
\end{center}
\end{figure}

Order the vertices of $G$ so that first four are the ones with weight $-1,-2,-3$ and $-4n-3$ respectively, and all the remaining ones corresponding to $-2$s on right are ordered according to the distance from the root starting with the closest one. In \cite{R1}, Rustamov proves that 

$$HF^+(-Y_n)=\mathcal{T}^+_{0}\oplus\mathbb{F}^{p_n}_{(0)} \oplus \bigoplus_{i=1}^{n-1}(\mathbb{F}^{p_i}_{q_{n-i}}\oplus \mathbb{F}^{p_i}_{q_{n-i}})$$

\noindent where $q_i=i(i+1)$ and $\mathbb{F}^r_{(k)}=\mathbb{F}[U]/U^r\mathbb{F}[U]$ and $U^{r-1}$ lies in degree $k$. More precisely, he shows that $\mathrm{Ker}U\subset HF^+(-Y_n)$ is generated by the characteristic vectors

$$K_i=(1,0,-1,-4n-3+2i,0,0,\cdots,0), \;\;\; i=1,2,\cdots,2n.$$

\noindent He also proves that the minimal relations are given as follows:

\begin{eqnarray}
U^{p_i}\otimes K_i& \sim & U^{p_i+q_{n-i}}\otimes K_{n+1} \\
U^{p_i}\otimes K_{n+i}& \sim &U^{p_i+q_{n-i}}\otimes K_{n} 
\end{eqnarray}

\vspace{0.2cm}

\noindent where $i=1,2,\cdots,n$. Note that the characteristic vectors $K_n$ and $K_{n+1}$ are in the bottom level, and their degree is zero.  

\vspace{0.4cm}

Our aim is to pin down the contact invariant $c^+(\xi_n)$ in $HF^+(-Y_n)$. Note that Proposition \ref{repr} in the stated form  can not be applied directly as it concerns Stein fillings of plumbed manifolds. However, as indicated in Remark \ref{rem:repr} it is also true for strong symplectic fillings. The only difference in the proof is that one uses Ghiggini's generalization \cite{Ghi} of Plamenevskaya's theorem \cite{P}. Alternatively, one can use the blow-up formula and handleslide invariance for this particular case to see that equations \ref{eq:rep1} and \ref{eq:rep2} hold. In any case, we see that the contact invariant $c^+(\xi_n)$ is represented by the first Chern class $c_1(\mathfrak{s}')$ of the canonical $\mathrm{spin}^c$ structure. In figure \ref{fig:seqblowup}, we keep track of the homology classes in order to pin down the first Chern class. By Equations \ref{eq:chern1} and \ref{eq:chern2}, we have $c_1(\mathfrak{s}')=K_n$ or $K_{n+1}$ depending on the orientation of  $L_n$, but the contact invariant is in the image of $U^{p_n-1}$ in any case.

\begin{flushright}
$\Box$
\end{flushright}

\vspace{2cm}

%\begin{figure}[p]
%	\includegraphics[width=1.00\textwidth]{table1.eps}
%	\caption{$HF^+$ of $\displaystyle M(-4,\frac{1}{2},\cdots,\frac{1}{2},\frac{1}{3})$ for $10$ $\mathrm{spin}^c$ structures. Characteristic vectors on left are the generators of $\mathrm{Ker}(U)$. Ones that lie in the same box induce the same $\mathrm{spin}^c$ structure on boundary.}
%	\label{fig:table1}
%\end{figure}

\begin{figure}[p]
	\includegraphics[width=1.00\textwidth]{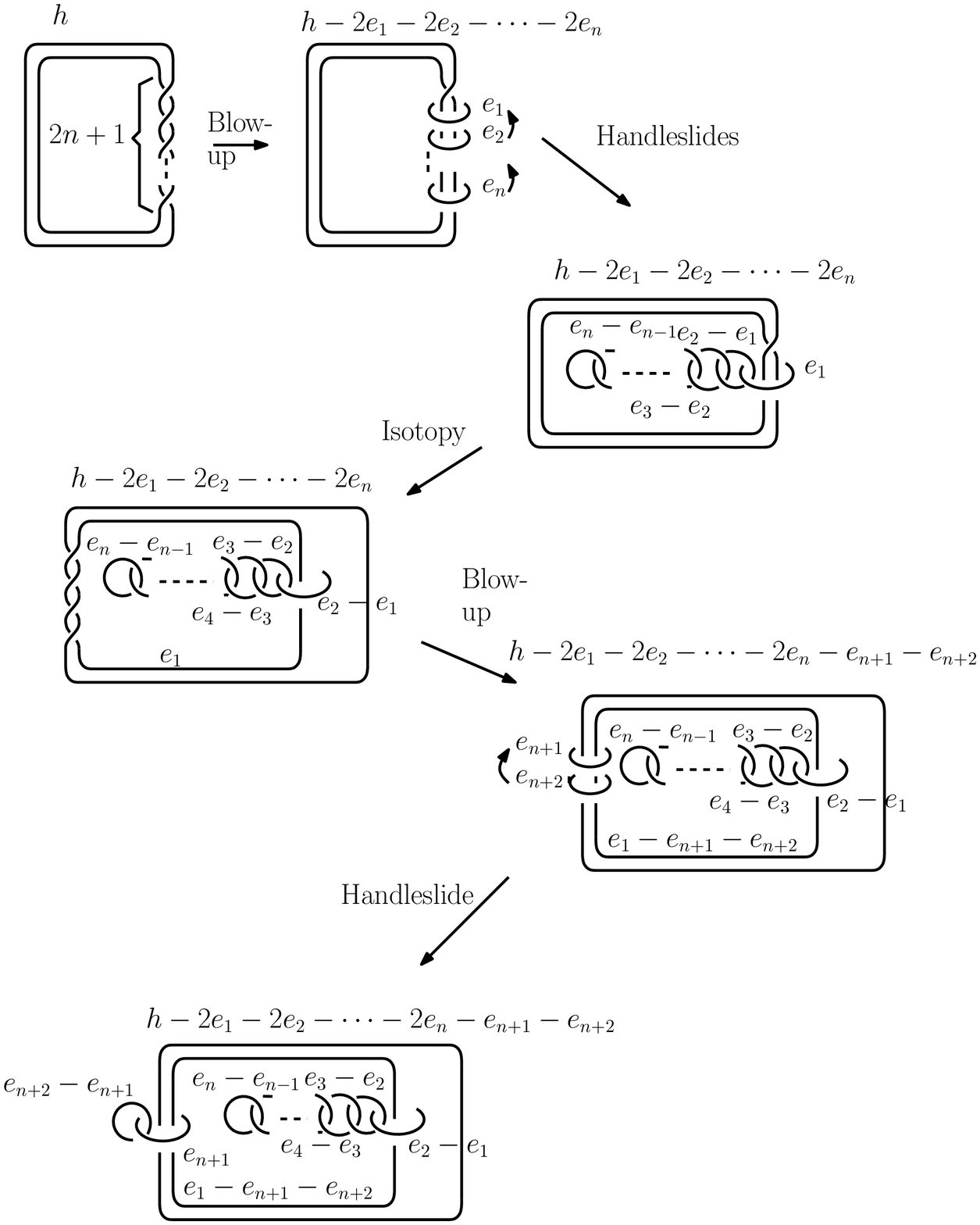}
	\caption{Sequence of Blow-ups from $K_n$ to  plumbing}
	\label{fig:seqblowup}
\end{figure}

\end{document}